\setlist{itemsep=.05cm}
\newcommand{\var}[1]{\boldsymbol{#1}}
\newcommand*{\boldgreek}[1]{%
  \textpdfrender{%
    TextRenderingMode=FillStroke,%
    LineWidth=.4pt,%
  }{#1}%
}
\newcommand*{\boldDelta}[1]{%
  \textpdfrender{%
    TextRenderingMode=FillStroke,%
    LineWidth=.6pt,%
  }{#1}%
}
\numberwithin{theorem}{section}
\begin{document}

\title{Improved Mixing and Pressure Loss Formulations for Gas Network Optimization}

\author{Geonhee Kim          \and
        Christopher Lourenco \and
        Daphne Skipper       \and
        Luze Xu
}

\authorrunning{Kim et.\ al} 

\institute{G. Kim \at
              Department of Mathematics, US Naval Academy \\
              \email{aabb2126@gmail.com}          
           \and
           C. Lourenco \at
            Department of Mathematics, US Naval Academy \\
            \email{lourenco@usna.edu}
            \and
            D. Skipper \at
            Department of Mathematics, US Naval Academy \\
            \email{skipper@usna.edu}
            \and
            L. Xu \at 
            Department of Mathematics, UC Davis \\
            \email{lzxu@ucdavis.edu}
}

\date{}

\maketitle

\begin{abstract}
Non-convex, nonlinear gas network optimization models are used to determine the feasibility of flows on existing networks given constraints on network flows, gas mixing, and pressure loss along pipes. 
This work improves two existing gas network models: a discrete mixed-integer nonlinear program (MINLP) that uses binary variables to model positive and negative flows, and a continuous nonlinear program (NLP) that implements complementarity constraints with continuous variables. 
We introduce cuts to expedite the MINLP and we formulate two new pressure loss models that leverage the flow-splitting variables: one that is highly accurate and another that is simpler but less accurate. 
In computational tests using the global solver BARON our cuts and accurate pressure loss improves: (1) the average run time of the MINLP by a factor of 35, (2) the stability of the MINLP by solving every tested instance within 2.5 minutes (the baseline model timed out on 25\% of instances), (3) the stability of the NLP by solving more instances than the baseline. Our simpler pressure loss model further improved run times in the MINLP (by a factor of 48 versus the baseline MINLP), but was unstable in the context of the NLP.

\end{abstract}

\newpage
\section{Introduction}

Mathematical programming is widely used to plan large scale gas distribution networks (e.g., \cite{fugenschuh2015chapter,geissler2015solving,hante2019complementarity,pfetsch2015validation,hamedi2011optimization,martin2006mixed}). One of the primary gas network optimization problems is the ``validation of nominations'': testing whether a given set of supplies and demands is feasible on an existing network (see \cite{geissler2015chapter} and \cite{rios2015reviewoptimization}, for example). Gas networks contain many engineering components that make their models extremely complex (nonlinear and nonsmooth), and therefore challenging in the context of optimization.

A new focus in gas network optimization is the mixing of gas from different sources. The nomination validation problem requires ensuring that the quality of gas at exit nodes meets the required heat power levels. Gas-mixing requires a so-called pooling model: an NP-hard class of non-convex mathematical optimization problems that are used to model mixing in the chemical industry (see \cite{greenberg1995poolinganalyzing}). Here we consider nomination validation models that incorporate gas mixing, in addition to nonlinear pressure-loss models to ensure that gas pressure at all nodes remains within normal operating ranges. 

\subsection{Previous Work}

Our work builds on \cite{hante2019complementarity}, which presents and tests alternative nonlinear gas network nomination validation optimization models that capture gas pressure and gas mixing. The gas pressure model accounts for pressure-loss due to friction in pipes, as well as compressors and valves, which increase and decrease pressure, respectively, as needed to remain within a given operating range. In order to model the mixing of gasses at network nodes, the flow along each arc is split into forward and backward components. One of the models, the MINLP (mixed-integer nonlinear program), utilizes integer variables to model flow-splitting. However, the primary focus of \cite{hante2019complementarity} is to formulate and test alternative formulations of complementarity constraints that model flow-splitting using only continuous variables, in NLPs (nonlinear programs). All of these models include a common pressure loss approximation originally derived for water networks \cite{DrinkingWater2009_opt_eng}. 

Computational tests using state-of-the-art optimization solvers on random instances of a 582-node gas network demonstrated that the NLP models were far superior to the MINLP model. In fact, the tests were so one-sided that the MINLP did not appear to be a viable option. Two versions of the NLP were tested on two local NLP solvers, and the MINLP was tested on one global and one local MINLP solver. All runs were given a 1-hour time limit. The most successful NLP solver / model combination achieved locally optimal solutions to 66\% of all tested instances; each of those successful instances solved within 10 seconds. In contrast, the local MINLP solver obtained only three locally optimal solutions to the MINLP, while the global solver was not able to solve the MINLP even once.

A primary goal of this work is to determine if improvements to the MINLP formulation, combined with advances in global MINLP solver technology, could make MINLP formulations of these kinds of problems competitive with NLP formulations.

\subsection{Our Contribution}
We began our work by reproducing the MINLP and the best-performing NLP introduced by \cite{hante2019complementarity}, which we refer to as the ``baseline’’ models. We tested the baseline models on random instances of the 582-node network using BARON, a state-of-the-art commercial optimization solver that seeks globally optimal solutions to MINLP and NLP models. These preliminary tests were consistent with the findings of \cite{hante2019complementarity}, although not quite as stark: the number of solved instances were similar for both models, but BARON obtained solutions to the NLP much faster than to the MINLP (26 versus 751 seconds, on average).

It is well-known that reducing the size of the integer-relaxed feasible region by including valid, formulation-strengthening constraints, or ``cuts'', can significantly improve global solver performance on mixed-integer programs (linear or nonlinear). The reason for this is that tighter formulations generate better bounds, leading to faster convergence of branch-and-bound algorithms. Thus, we propose and test three classes of integer-programming cuts aimed at improving the computational performance of the MINLP by tightening the gas-mixing formulation.

We also propose and test two pressure-loss models as simpler alternatives to the current state-of-the-art square root pressure-loss model \cite{DrinkingWater2009_opt_eng,hante2019complementarity}. Both of these simplified pressure-loss models make use of the flow-splitting variables required by the gas-mixing model, somewhat offsetting the performance impairments introduced by the inclusion of gas-mixing. The first model is tuned for accuracy with the best known pressure-loss model, the so-called HP-PC model, for both laminar and turbulent flows. The second, simpler model is a smooth version of an existing pressure loss approximation, the so-called PKr model, which is designed to be as accurate as possible in the high pressure case (which is consistent with normal operating ranges of natural gas networks) while using a simple, signed quadratic function of flow. Unlike the integer-programming cuts, the proprosed pressure-loss models apply to both the MINLP and the NLP because both models include flow-splitting variables.

In our computational experiments with BARON on random instances of the 582-node network, the inclusion of the integer-programming cuts enabled BARON to solve every instance of the MINLP within the 1 hour time limit, and reduced the average solve time by nearly 60\%.  For comparison, the baseline MINLP timed out on 24 out of 100 instances. This first (more accurate) pressure loss model improved the average solve time of the MINLP with cuts by more than 93\%, and improved the reliability of the NLP, nearly halving the number of timed-out instances. The second pressure loss model further improved solve times for the MINLP (by an additional 26\%), but proved to be unstable in the context of the NLP model. 

Overall, our computational tests provide evidence that, at least when seeking globally optimal solutions, MINLP models can be competitive with NLP models for gas network nomination models that include pressure-loss and mixing. Local NLP solvers have also improved since the publication of \cite{hante2019complementarity}, so additional testing is needed to compare the performance of local solvers on the NLP models versus global solvers on the MINLP models.

\subsection{Organization}
In Section \ref{sec:Background}, we set the stage by introducing gas network optimization and defining terminology and notation. We also define the common model components that appear in all of the models that we test. In Section \ref{sec:mixing}, we present the baseline models: two mixing formulations that were proposed by \cite{hante2019complementarity}, one discrete (the MINLP) and one continuous (the NLP). In Section \ref{sec:cuts}, we present three classes of integer-programming cuts aimed at improving the performance of the MINLP. In Section \ref{sec:pressureloss}, we present the pressure loss formulation from \cite{DrinkingWater2009_opt_eng,hante2019complementarity}, along with our modified pressure loss formulations. The derivation of the pressure loss formulations requires a detailed account of existing pressure loss models; that background is included here, as well. In Section \ref{sec:compresults}, we describe our computational experiments and present their results. We end with closing remarks in Section \ref{sec:conclusion}.

\section{Introduction to gas network optimization} \label{sec:Background}

In this section, we provide a brief overview of gas network optimization. Section \ref{sec:Background:Fundamentals} introduces the fundamental properties of the studied gas network optimization models, as well as key assumptions. Section \ref{sec:Background:Notation} defines notation that is common to all model variants. Section \ref{sec:Background:Template} introduces the basic template for all of the model variants we tested, including all common constraints and the common objective function. 

\subsection{Gas network fundamentals} \label{sec:Background:Fundamentals}

Gas network optimization models are typically based on a graph $G = (V,A)$, where $V$ is the set of nodes and $A$ is the set of arcs. The set of nodes are further subdivided into input nodes, $V_+$, exit nodes, $V_-$, and internal nodes $V_0$ (note that $+$, $-$, and $0$ represent the type of flow input/output at each of these network elements). The arcs represent transition elements in the network: pipes, compressors (where pressure can be increased), and control valves (where pressure can be reduced). In order to emphasize the gas mixing and pressure loss model components, we do not model the behavior of other transitional elements such as valves, resistors, or coolers, and instead treat them as zero-length pipes. For a more detailed treatment of the individual components of the model, we refer the reader to \cite{fugenschuh2015chapter,koch2015evaluating}.

Recent work in gas network optimization models \cite{hante2019complementarity,schmidt2016high} typically simplify the model by making several key assumptions, and we make the same assumptions. In particular, we assume that the network is stationary and isothermal, meaning that the state of gas and its temperature remain constant throughout the network. Furthermore, we assume each pipe has a slope of zero and has flow only in one direction, and the direction of flow in each pipe is unspecified (before optimizing). Likewise, for pressure loss models we assume that all molar masses are equal. 

\subsection{Notation} \label{sec:Background:Notation}

In this section, we give a brief overview of the sets, decision variables, and parameters used throughout our models. As much as possible 
(i.e., when it does not create a conflict), we have chosen notation that is consistent with the referenced literature. This makes it easier for the interested reader to translate between manuscripts. To further increase readability, all of our decision variables are shown in bold, making the variables easy to identify in the models. 

As described at the beginning of this section, a gas network is modeled on a graph $G = (V,A)$. The set of nodes, $V$, is divided into input, $V_+$, exit, $V_-$, and internal, $V_0$, nodes. Similarly, the arcs, $A$, in the network are partitioned into pipes, $A_{pi}$, compressors, $A_{cm}$, control valves, $A_{cv}$, and zero-length pipes, $A_0$. All transitional network elements that do not fall into one of the other three categories (e.g., valves, resistors, coolers) are treated as zero-length pipes.

The model variables capture the rate of flow, pressure, and calorific values of gas throughout the network.
 The fundamental variables common to all of our models are listed in Table \ref{tab:variables}. Pressure ($\boldDelta{\Delta}_{u,v}$, $\var{p}_u$) and calorific value ($\var{H}_u$, $\var{H}_{u,v}$) variables are all nonnegative, but  flow ($\var{q}_{u,v}$) can be positive or negative; the sign of $\var{q}_{u,v}$ represents the direction of flow along the arc $(u,v)$. There are additional flow variables that split the flow into positive and negative components. These and other auxiliary variables are introduced as needed. Again, we note that all decision variables are shown in bold to help differentiate variables and parameters.

\begin{table}[ht!]
\caption{Decision variables}
\label{tab:variables}
\centering \footnotesize
\begin{tabular}{cll} \toprule
Notation  & Description                   & Sets      \\ \midrule
$\var{p}_u$            & Gas pressure               & $u \in V$ \\
$\var{H}_u$        & Calorific value at the node & $u \in V$ \\
$\var{q}_{u,v}$     & Gas flow    & $(u,v) \in A$ \\
$\var{H}_{u,v}$        & Calorific value on the edge & $(u,v) \in A$ \\ 
$\boldDelta{\Delta}_{u,v}$ & Absolute pressure change & $(u,v) \in A_{cm} \cup A_{cv}$ \\
\bottomrule
\end{tabular}
\end{table}

Parameters appearing directly in the optimization model template provided in Section \ref{sec:Background:Template} are listed in Tables \ref{tab:node_parameters} and \ref{tab:arc_parameters}. We introduce additional parameters to accurately model pressure loss in Section \ref{sec:pressureloss}, where the pressure loss models are presented.

\begin{table}[ht!]
\caption{Node parameters}
\label{tab:node_parameters}
\centering \footnotesize
\begin{tabular}{clcc} \toprule
Notation & Description        & Sets  & Units \\ \midrule
$q^{nom}_u$      & Supply ($\geq 0$) or demand ($\leq 0$) & $u \in V$ & $m^3*s^{-1}$  \\
$H_u^{sup}$  & Calorific value of supplied gas & $u \in V_+$ & $J*m^{-3}$  \\ 
$\underbar{$p$}_u$, $\overline{p}_u$ & Pressure bounds (on $\var{p}_u$) & $u \in V$ & $Pa$ \\ 
$\underbar{$H$}_{u}$, $\overline{H}_{u}$ & Calorific value bounds (on $\var{H}_u$) & $u \in V$ & $J*m^{-3}$ \\
$\underbar{$P$}_u$, $\overline{P}_u$ & Heat power bounds \cite{hante2019complementarity} &  $u \in V_{-}$ & W \\
$\underbar{$q$}_u$, $\overline{q}_u$ & Node flow bounds & $u \in V_-$ & $m^3*s^{-1}$  \\ 
\bottomrule
\end{tabular}
\end{table}

\begin{table}[ht!]
\caption{Arc parameters}
\label{tab:arc_parameters}
\centering \footnotesize
\begin{tabular}{clcc} \toprule 
Notation & Description   & Sets & Units \\ \midrule
$\underbar{$q$}_{u,v}$, $ \overline{q}_{u,v}$ & Flow bounds & $(u,v) \in A$ &  $m^3*s^{-1}$ \\
$\underbar{$H$}_{u,v}$, $\overline{H}_{u,v}$ & Calorific value bounds & $(u,v) \in A$ & $J*m^{-3}$ \\
$\underbar{$\Delta$}_{u,v}$, $\overline{\Delta}_{u,v}$ & Pressure change bounds & $(u,v) \in A_{cm} \cup A_{cv}$ & $Pa$ \\
\bottomrule
\end{tabular}
\end{table}

\subsection{Optimization model template}\label{sec:Background:Template}

The models we consider are nonlinear optimization models that aim to capture the effects of gas flow, gas mixing, and pressure change throughout the pipes and nodes in the network. Below, we present the basic template for all of the gas network optimization models we consider. The constraints defined in the template are the same in all of the models. The model components that are not defined in the template (i.e., \eqref{cons:GeneralFlowSplit}, \eqref{cons:GeneralMixing}, and \eqref{cons:GeneralPressureLoss} below) take different forms in the models and are defined in later sections. The basic model template is as follows:
\begin{align}
\text{Minimize} \quad &  z ~:=~ \sum_{(u,v) \in A_{cm}} \boldDelta{\Delta}_{u,v}, \nonumber \\
\text{subject to} \quad &  q^{nom}_u ~=~  \sum_{(u,v) \in A}\var{q}_{u,v} - \sum_{(v,u) \in A}\var{q}_{v,u}, \quad &&\forall~ u \in V; \label{cons:massbalance}\\[0.2em]
& (Flow\text{-}splitting) \label{cons:GeneralFlowSplit} \\[0.5em]
& (Mixing) \label{cons:GeneralMixing} \\[0.5em]
& \boldDelta{\Delta}_{u,v} ~=~ \var{p}_v - \var{p}_u, &&\forall~ (u,v) \in A_{cm}; \label{cons:deltacm} \\[0.5em]
& \boldDelta{\Delta}_{u,v} ~=~ \var{p}_u - \var{p}_v, &&\forall~ (u,v) \in A_{cv}; \label{cons:deltacv} \\[0.5em]
& \var{p}_v^2 ~=~ \var{p}_u^2 - \boldgreek{\phi}_{u,v}, &&\forall~ (u,v) \in A_{pi}; \label{cons:pressureloss} \\[0.5em]
& (Pressure~loss) \label{cons:GeneralPressureLoss} \\[0.5em]
& \underbar{$P$}_u ~\leq~ \var{H}_u \vert q^{nom}_u \vert ~\leq~ \overline{P}_u, &&\forall~ u \in V_-; \label{cons:exitheatbounds} \\[0.5em]
& \underbar{$q$}_{u,v} ~\leq~ \var{q}_{u,v} ~\leq~ \overline{q}_{u,v},  &&\forall~ (u,v) \in A; \label{cons:qbound} \\[0.5em]
& \underbar{$p$}_u ~\leq~ \var{p}_u ~\leq~ \overline{p}_u, &&\forall~u \in V; \label{cons:pbound} \\[0.5em]
& \underbar{$H$}_{u,v} ~\leq~ \var{H}_{u,v} ~\leq~ \overline{H}_{u,v}, &&\forall~ (u,v) \in A; \label{cons:Hcabound} \\[0.5em]
& \underbar{$H$}_{u} ~\leq~ \var{H}_u ~\leq~ \overline{H}_{u}, &&\forall~ u \in V; \label{cons:Hcubound} \\[0.5em]
& 0 ~\leq~ \boldDelta{\Delta}_{u,v} ~\leq~ \overline{\Delta}_{u,v}, &&\forall~ (u,v) \in A_{cm} \cup A_{cv}. \label{cons:deltabound} 
\end{align}

The objective function aims to minimize the pressure increase that must be performed by compressors throughout the model. Constraints \eqref{cons:massbalance} enforce the balance of flow throughout the network. These flows are further used in flow-splitting \eqref{cons:GeneralFlowSplit} and mixing \eqref{cons:GeneralMixing}, which we describe in Section \ref{sec:mixing}. Next, \eqref{cons:deltacm} and \eqref{cons:deltacv} model the change in pressure across each compressor and control valve. Constraint \eqref{cons:pressureloss} models pressure loss in pipes; specifically $\boldgreek{\phi}_{u,v}$ models pressure loss along pipe $(u,v)$. The most accurate model for $\boldgreek{\phi}_{u,v}$ is nonsmooth and thus must be replaced by a smooth approximation. We describe a previously-implemented smooth approximation of $\boldgreek{\phi}_{u,v}$ and propose two new ones in Section \ref{sec:pressureloss}, as various options for \eqref{cons:GeneralPressureLoss}. Constraints \eqref{cons:exitheatbounds} give bounds on the heat power for all exit nodes. Finally, constraints \eqref{cons:qbound}-\eqref{cons:deltabound} are variable bounds.

Each of the missing model components has discrete and continuous versions, which we introduce in Sections \ref{sec:mixing} (flow-splitting and mixing) and \ref{sec:pressureloss} (pressure loss). We also introduce cuts, additional constraints aimed at tightening the model formulations, in Section \ref{sec:cuts}.

\section{Gas mixing model} \label{sec:mixing}

Gases with different calorific values mix at nodes of the network. The gas mixing constraints model the mixing of these gases at nodes, as well as the propagation of the newly mixed gases along arcs. Both formulations require the flow to be split into variables to capture the forward and backwards flow along each arc, so we begin by presenting the discrete and continuous flow-splitting formulations.

\subsection{Flow-Splitting} \label{sec:FlowSplitting}

The pipes in a gas network can have flow in either direction; mathematically this is modeled by allowing the flow variable, $\var{q}_{u,v}$, to be positive (for ``forward'' flow from $u$ to $v$) or negative (for ``backward'' flow from $v$ to $u$). We introduce two new variables $\boldgreek{\beta}_{u,v}$ and $\boldgreek{\gamma}_{u,v}$, which represent the positive and negative flow along $(u,v)$, respectively. 
These variables are required by the mixing constraints to isolate all of the flow into a node: the positive flow on arcs directed into the node plus the negative flow on arcs directed out of the node.

In this section, we describe how flow-splitting is modeled in both the discrete and continuous models. The reader is referred to \cite{hante2019complementarity}, which introduced and tested these models in the context of mixing in gas networks, for a more in-depth presentation of the flow-splitting models.

\subsubsection{Discrete flow-splitting} \label{sec:FlowSplitting:Discrete}

To model the positive and negative flows throughout the network, the discrete models utilize binary variables, $\var{d}_{u,v}$, defined as follows:
\[
\var{d}_{u,v} ~:=~ \begin{cases}
    1 & \text{if $\var{q}_{u,v} \geq 0$} \\
    0 & \text{if $\var{q}_{u,v} < 0$}
\end{cases},\quad ~\forall ~(u,v) \in A .
\]
Thus, if $d_{u,v} = 1$ the flow along arc $(u,v)$ is at least $0$, meaning that $\boldgreek{\beta}_{u,v} = \var{q}_{u,v}$ and $\boldgreek{\gamma}_{u,v} = 0$. Conversely, if $d_{u,v} = 0$ the flow along arc $(u,v)$ is negative, meaning that $\boldgreek{\beta}_{u,v} = 0$ and $\boldgreek{\gamma}_{u,v} = |\var{q}_{u,v}|$. We can model this splitting of flows linearly as follows, for all $(u,v) \in A$:
\begin{subequations}\label{cons:flowsplit_linear}
\begin{align}
    &\var{q}_{u,v} ~=~ \boldgreek{\beta}_{u,v} -\boldgreek{\gamma}_{u,v}; \\
    &0 ~\le~ \boldgreek{\beta}_{u,v} ~\le~ \var{d}_{u,v} \overline{q}_{u,v}; \\
    &0 ~\le~ \boldgreek{\gamma}_{u,v} ~\le~ (\var{d}_{u,v}-1)\underline{q}_{u,v}; \\
    &\var{d}_{u,v} ~\in~ \{0,1\}.
\end{align}
\end{subequations}

\subsubsection{Continuous flow-splitting}

The continuous models do not utilize the binary variables $\var{d}_{u,v}$. Positive and negative flows can be modeled as $\boldgreek{\beta}_{u,v}$ and $\boldgreek{\beta}_{u,v} - \var{q}_{u,v}$, respectively, via a simple nonlinear model with no additional variables (beyond $\boldgreek{\beta}_{u,v}$). However, the simplest version of such a model does not enforce mild complementarity \cite{luo1996mathematical,hante2019complementarity}, meaning that in the NLP case, standard constraint qualifications are violated at feasible points. To ensure complementarity, the following constraints are included for each $(u,v) \in A$:
\begin{subequations}\label{cons:Flow_Split_MPCC}
\begin{align}
    &\boldgreek{\beta}_{u,v} + \epsilon - \boldgreek{\mu}_{u,v,1} - \boldgreek{\mu}_{u,v,2} ~=~ 0; \label{cons:Flow_Split_MPCC_1}\\
    &\boldgreek{\beta}_{u,v} ~\ge~ 0;~~~~\boldgreek{\beta}_{u,v} - \var{q}_{u,v} ~\ge~ 0; \label{cons:Flow_Split_MPCC_2}\\
    &\boldgreek{\mu}_{u,v,1},~ \boldgreek{\mu}_{u,v,2} ~\ge~ 0; \label{cons:Flow_Split_MPCC_3}\\
    &\boldgreek{\mu}_{u,v,1}\boldgreek{\beta}_{u,v}~\leq~0;~~~~  \boldgreek{\mu}_{u,v,2}(\boldgreek{\beta}_{u,v} - \var{q}_{u,v})~\leq~0. \label{cons:Flow_Split_MPCC_4}
\end{align}
\end{subequations}
In these constraints, $\boldgreek{\beta}_{u,v}$ still represents the positive part of the flow along $(u,v)$, and the negative flow can be recovered as $\boldgreek{\beta}_{u,v} - \var{q}_{u,v}$. Two new auxiliary variables, $\boldgreek{\mu}_{u,v,1}$ and $\boldgreek{\mu}_{u,v,2}$, are defined on each arc and $\epsilon$ is a parameter (chosen to be $10^{-6}$ as in \cite{hante2019complementarity}). This formulation satisfies Abadie constraint qualification, which makes KKT conditions necessary for any local minimum of a sufficiently smooth cost function subject to these constraints \cite{hante2019complementarity}.

\subsection{Mixing and propogation}
Armed with the flow-splitting variables, we are ready to present the mixing and propagation models.

\subsubsection{Discrete mixing and propagation}

Using the positive and negative flow variables, we obtain a mixing formulation as follows:
\begin{subequations} \label{cons:mixing_betagamma_smooth} 
\begin{align}
    & \sum_{(v,u) \in A}\boldgreek{\beta}_{v,u}\var{H}_u + \sum_{(u,v) \in A}\boldgreek{\gamma}_{u,v}\var{H}_u \nonumber \\
&\quad\quad ~=~ \sum_{(v,u) \in A}\boldgreek{\beta}_{v,u} \var{H}_{v,u} + \sum_{(u,v) \in A}\boldgreek{\gamma}_{u,v} \var{H}_{u,v},&&\forall~u\in V\setminus V_+; \label{eqn:MINLP_mixing_1}\\
    &q^{nom}_u\var{H}_u + \sum_{(v,u) \in A}\boldgreek{\beta}_{v,u}\var{H}_u + \sum_{(u,v) \in A}\boldgreek{\gamma}_{u,v}\var{H}_u  \nonumber \\
&\quad\quad ~=~ \var{H}_u^{sup}q^{nom}_u + \sum_{(v,u) \in A}\boldgreek{\beta}_{v,u} \var{H}_{v,u} + \sum_{(u,v) \in A}\boldgreek{\gamma}_{u,v} \var{H}_{u,v},&&\forall~u\in V_+.\label{eqn:MINLP_mixing_2}
\end{align}
\end{subequations}
Likewise, the following constraints model propagation from nodes $u \in A$ to arcs directed away from \eqref{eqn:MINLP_propagation_1_linear} and towards \eqref{eqn:MINLP_propagation_2_linear} $u$:
\begin{subequations} \label{cons:propagation_betagamma} 
\begin{align}
    & -M_1(1-\var{d}_{u,v}) ~\le~ (\var{H}_u - \var{H}_{u,v}) ~\le~ M_2(1-\var{d}_{u,v}), &\forall~(u,v) \in A; \label{eqn:MINLP_propagation_1_linear}\\
    & -M_1 \var{d}_{v,u} ~\le~ (\var{H}_u - \var{H}_{v,u}) ~\le~ M_2 \var{d}_{v,u}, &\forall~(v,u) \in A,  \label{eqn:MINLP_propagation_2_linear}
\end{align}
\end{subequations}
where $M_1=\overline{H}_{u,v} - \underline{H}_{u}$, $M_2=\overline{H}_{u} - \underline{H}_{u,v}$, $M'_1=\overline{H}_{v,u} - \underline{H}_{u}$, and $M'_2=\overline{H}_{u} - \underline{H}_{v,u}$.

\subsubsection{Continuous mixing and propagation}

The continuous mixing formulation is identical to the discrete case except that the negative flow variable, $\boldgreek{\gamma}_{u,v}$, is replaced by $\boldgreek{\beta}_{u,v} - \var{q}_{u,v}$. Thus, we obtain the following continuous formulation:
\begin{subequations} \label{cons:mixing_MPCC} 
\begin{align}
    &\sum_{(v,u) \in A}\boldgreek{\beta}_{v,u}\var{H}_u + \sum_{(u,v) \in A} (\boldgreek{\beta}_{u,v} - \var{q}_{u,v})\var{H}_u \nonumber \\
&\quad\quad ~=~ \sum_{(v,u) \in A}\boldgreek{\beta}_{v,u} \var{H}_{v,u} + \sum_{(u,v) \in A} (\boldgreek{\beta}_{u,v} - \var{q}_{u,v}) \var{H}_{u,v}, && \forall~u\in V\setminus V_+;\label{eqn:MPCC_mixing1}\\
    &q^{nom}_u\var{H}_u + \sum_{(v,u) \in A}\boldgreek{\beta}_{v,u}\var{H}_u + \sum_{(u,v) \in A}(\boldgreek{\beta}_{u,v} - \var{q}_{u,v})\var{H}_u  \nonumber \\
&\quad\quad ~=~ \var{H}_u^{sup}q^{nom}_u + \sum_{(v,u) \in A}\boldgreek{\beta}_{v,u} \var{H}_{v,u} + \sum_{(u,v) \in A}(\boldgreek{\beta}_{u,v} - \var{q}_{u,v}) \var{H}_{u,v}; &&\forall~u\in V_+.\label{eqn:MPCC_mixing2}
\end{align}    
\end{subequations}

Without the binary variable $\var{d}_{u,v}$, the propagation constraints cannot be formulated linearly as in \eqref{cons:propagation_betagamma}. Instead, the propagation constraints of the NLP are modeled using bilinear terms as follows:
\begin{subequations} \label{cons:propagation_MPCC}
\begin{align}
    & (\var{H}_u - \var{H}_{u,v})\boldgreek{\beta}_{u,v} ~=~ 0, \quad &&\forall~ u \in V, (u,v) \in A; \label{eqn:MPCC_propagation_1_nl} \\
    & (\var{H}_u - \var{H}_{v,u})(\boldgreek{\beta}_{v,u} - \var{q}_{v,u}) ~=~ 0, \quad &&\forall~u \in V , (v,u) \in A. \label{eqn:MPCC_propagation_2_nl}
\end{align}    
\end{subequations}

\section{Cuts to accelerate mixing and flow formulations} \label{sec:cuts}

In its base state, the continuous optimization model outperforms the discrete model \cite{hante2019complementarity}; however, in general, discrete models can be greatly improved by adding cuts (additional constraints) to tighten their formulations, leading to improved computational performance. In this section, we introduce three types of cuts that significantly accelerate the discrete model. Specifically, Section \ref{sec:cuts:Mc} aims to improve the mixing constraints via the standard McCormick inequalities. Next, Sections \ref{sec:cuts:FlowDirection} and \ref{sec:cuts:dbounds} improve the discrete model by introducing cuts involving the binary variables, $\var{d}_{u,v}$. Specifically, Section \ref{sec:cuts:FlowDirection} introduces flow-direction cuts (similar to a set of cuts tested by \cite{Habeck2022} for models with potential-based flows) to accelerate flow-balance and flow-splitting in the discrete model, and Section \ref{sec:cuts:dbounds} introduces bounds using the binary variable $\var{d}_{u,v}$ on the bilinear terms in the mixing formulation.

\subsection{McCormick inequalities}  \label{sec:cuts:Mc}

Given a bilinear expression, $\var{x} \var{y}$, where the variables are bounded as $x_{\ell} \leq \var{x} \leq x_u$ and $y_{\ell} \leq \var{y} \leq y_u$, the McCormick inequalities \cite{mccormick1976computability} provide the convex hull formulation of the set $\{(\var{x},\var{y},\var{x}\var{y}): x_{\ell} \leq \var{x} \leq x_u \text{ and } y_{\ell} \leq \var{y} \leq y_u\}$. There are four such bilinear expressions in the mixing constraints \eqref{cons:mixing_betagamma_smooth}, $\boldgreek{\beta}_{v,u} \var{H}_u$, $\boldgreek{\gamma}_{u,v} \var{H}_{u}$, $\boldgreek{\beta}_{v,u} \var{H}_{v,u}$, and $\boldgreek{\gamma}_{u,v} \var{H}_{u,v}$. Below, \eqref{cons:McCormickMixing} apply the McCormick inequalities to these four expressions while removing redundant constraints (those expressions whose lower or upper bound is redundant with given variable bounds).
\begin{subequations} \label{cons:McCormickMixing}
\begin{align}
&\boldgreek{\beta}_{v,u}\var{H}_u ~\geq~ \overline{q}_{v,u}\var{H}_u + \boldgreek{\beta}_{v,u}\overline{H}_u - \overline{q}_{v,u}\overline{H}_u, \\
&\boldgreek{\beta}_{v,u}\var{H}_u ~\leq~ \overline{q}_{v,u}\var{H}_u + \boldgreek{\beta}_{v,u}\underbar{$H$}_u - \overline{q}_{v,u}\underbar{$H$}_u, \quad &&\forall~ u \in V,~(v,u) \in A; \\[.5em]
&\boldgreek{\gamma}_{u,v}\var{H}_u ~\geq~ |\underbar{$q$}_{u,v}|\var{H}_u + \boldgreek{\gamma}_{u,v}\overline{H}_u - |\underbar{$q$}_{u,v}|\overline{H}_u, \\
&\boldgreek{\gamma}_{u,v}\var{H}_u ~\leq~ |\underbar{$q$}_{u,v}|\var{H}_u + \boldgreek{\gamma}_{u,v}\underbar{$H$}_u - |\underbar{$q$}_{u,v}|\underbar{$H$}_u, \quad &&\forall~ u \in V,~(u,v) \in A; \\[.5em]
&\boldgreek{\beta}_{v,u}\var{H}_{v,u} ~\geq~ \overline{q}_{v,u}\var{H}_{v,u} + \boldgreek{\beta}_{v,u}\overline{H}_{v,u} - \overline{q}_{v,u}\overline{H}_{v,u}, \\
&\boldgreek{\beta}_{v,u}\var{H}_{v,u} ~\leq~ \overline{q}_{v,u}\var{H}_{v,u} + \boldgreek{\beta}_{v,u}\underbar{$H$}_{v,u} - \overline{q}_{v,u}\underbar{$H$}_{v,u}, \quad &&\forall~ u \in V,~(v,u) \in A; \\[.5em]
&\boldgreek{\gamma}_{u,v}\var{H}_{u,v} ~\geq~ |\underbar{$q$}_{u,v}|\var{H}_{u,v} + \boldgreek{\gamma}_{u,v}\overline{H}_{u,v} - |\underbar{$q$}_{u,v}|\overline{H}_{u,v}, \\
&\boldgreek{\gamma}_{u,v}\var{H}_{u,v} ~\leq~ |\underbar{$q$}_{u,v}|\var{H}_{u,v} + \boldgreek{\gamma}_{u,v}\underbar{$H$}_{u,v} - |\underbar{$q$}_{u,v}|\underbar{$H$}_{u,v}, \quad &&\forall~ u \in V,~(u,v) \in A.
\end{align}
\end{subequations}

Note that this variant of the McCormick inequalities is applied directly to the discrete model. For the continuous model, the constraints involving $\boldgreek{\gamma}_{u,v} \var{H}_u$ and $\boldgreek{\gamma}_{u,v} \var{H}_{u,v}$ are replaced by similar constraints involving $\var{q}_{u,v} \var{H}_u$ and $\var{q}_{u,v} \var{H}_{u,v}$. We experimented with other variants of the McCormick bounds (for example directly on $(\boldgreek{\beta}_{u,v}-\var{q}_{u,v}) \var{H}_u$) and bounds on $\var{q}_{u,v} \var{H}_u$ and $\var{q}_{u,v} \var{H}_{u,v}$ performed slightly better computationally.

\subsection{Flow-direction cuts} \label{sec:cuts:FlowDirection}

The nodes in the network, $V$, are partitioned into three subsets: internal nodes, $V_0$, entry nodes $V_+$, and exit nodes $V_-$. The next sets of constraints enforce flow rules on the nodes in the network:
\begin{subequations} \label{cons:flowcuts}
\begin{align} 
&\sum_{(u,v)\in A} \hspace{-.5em}\var{d}_{u,v} ~~+~ \sum_{(v,u)\in A} \hspace{-.5em}(1-\var{d}_{v,u}) ~\geq~ 1, &&\forall~u \in V_+ \cup V_0; \label{cons:entry_has_flowout} \\
&\sum_{(v,u)\in A} \hspace{-.5em}\var{d}_{v,u} ~~+~ \sum_{(u,v)\in A} \hspace{-.5em}(1-\var{d}_{u,v}) ~\geq~ 1, &&\forall~u \in V_- \cup V_0. \label{cons:exit_has_flowin}
\end{align}
\end{subequations}
Constraint \eqref{cons:entry_has_flowout} does not allow for all flows at a node to be in-flows. Constraint \eqref{cons:exit_has_flowin} enforces the same for out-flows.

\subsection{Bilinear Bounds for Mixing Constraints} \label{sec:cuts:dbounds}

The bilinear terms $\boldgreek{\beta}_{v,u} \var{H}_u$, $\boldgreek{\beta}_{v,u} \var{H}_{v,u}$, $\boldgreek{\gamma}_{u,v} \var{H}_u$, and $\boldgreek{\gamma}_{u,v} \var{H}_{u,v}$ appear in the gas mixing constraints \eqref{cons:mixing_betagamma_smooth}. Because $0 \leq \boldgreek{\beta}_{u,v} \leq \var{d}_{v,u} \overline{q}_{v,u}$, the first two bilinear terms must be zero when $\var{d}_{v,u} = 0$. The same is true for last two bilinear terms when $\var{d_{u,v}} = 1$. Thus, we explicitly provide bounds on these terms as follows:

\begin{subequations} \label{eqn:bilinearDbounds}
\begin{align}
\boldgreek{\beta}_{v,u} \var{H}_u &~\leq~ \var{d}_{v,u} \overline{q}_{v,u} \overline{H}_u,~~~~&\forall~u\in V,~(v,u) \in A; \\
\boldgreek{\beta}_{v,u} \var{H}_{v,u} &~\leq~ \var{d}_{v,u} \overline{q}_{v,u} \overline{H}_{v,u}, &\forall~u\in V,~(v,u) \in A; \\
\boldgreek{\gamma}_{u,v} \var{H}_u &~\leq~ (1-\var{d}_{u,v}) \vert \underbar{$q$}_{u,v} \vert \overline{H}_u, &\forall~u\in V,~(u,v) \in A; \\
\boldgreek{\gamma}_{u,v} \var{H}_{u,v} &~\leq~ (1-\var{d}_{u,v}) \vert \underbar{$q$}_{u,v} \vert \overline{H}_{u,v}, &\forall~u\in V,~(u,v) \in A.
\end{align}
\end{subequations}
\noindent The first two constraints enforce that if $\var{d}_{v,u} = 0$ then $\boldgreek{\beta}_{v,u} \var{H}_u$ and $\boldgreek{\beta}_{v,u} \var{H}_{v,u}$ are both zero. The third and fourth constraints enforce that if $\var{d}_{u,v} = 1$ then $\boldgreek{\gamma}_{u,v} \var{H}_u$ and $\boldgreek{\gamma}_{u,v} \var{H}_{u,v}$ are both zero. Note that the logic of these constraints is already present in the model; however, in our tests, including direct bounds on the bilinear terms slightly improved performance of the model.

\section{Pressure loss in pipes} \label{sec:pressureloss}

Pressure loss along pipes is modeled as $\boldgreek{\phi}_{u,v}$, which represents the square of the pressure lost due to friction along pipe $(u,v)$ in constraint \eqref{cons:pressureloss}. In general, this term is approximated via different functions, the most accurate of which is the so-called Hagen-Poisseuille Prandtl-Colebrook-White (HP-PC) pressure loss model (\S \ref{sec:pressureloss:HPPC}). This model is nonlinear, piecewise, and contains implicitly defined terms. The current state-of-the-art for optimization solvers, used for both the NLP and MINLP baseline models, is a square root approximation (\S \ref{sec:pressureloss:square}) which was originally developed for a water network \cite{DrinkingWater2009_opt_eng} and then adapted to gas networks \cite{fugenschuh2015chapter,schmidt2015high}. This square root model is smooth and twice differentiable; however, square root terms can be computationally expensive in the context of MINLP. Thus, we aim to expedite the computational performance of the pressure loss approximation by exploiting the flow splitting variables to derive two new pressure loss approximations that have better computational performance than the square root approximation. 

Throughout our discussion, we use the notation $\boldgreek{\phi}_{u,v}$ (with various superscripts) to represent pressure loss, rather than including variable(s) in the notation, e.g., $\boldgreek{\phi}_{u,v}(\var{q}_{u,v})$, with the understanding that all of the pressure loss models are functions of flow as represented by some combination of $\var{q}_{u,v}$, $\boldgreek{\beta}_{u,v}$, and $\boldgreek{\gamma}_{u,v}$.

Section \ref{sec:pressureloss:fundamentals} reviews the fundamental assumptions and parameters underlying these pressure loss models. Then, Sections \ref{sec:pressureloss:HPPC} and \ref{sec:pressureloss:square} present the highly accurate HP-PC and square root approximations. In Section \ref{sec:pressureloss:new}, we develop two new smooth pressure loss approximations that take advantage of the directional flow variables required for the mixing model.  In Section \ref{sec:pressureloss:compare},  we present graphs of all of the pressure loss formulations for both laminar and turbulent flows to compare the smoothing models to each other and to the baseline model.

\subsection{Fundamentals of Pressure Loss} \label{sec:pressureloss:fundamentals}

Loss of pressure as gas travels through a pipe $(u,v)$ is a function of the gas flow, $\var{q}_{u,v}$,  and the friction in the pipe. The friction in pipe $(u,v)$ is modeled by $\omega_{u,v} \lambda(\var{q}_{u,v})$ \cite{fugenschuh2015chapter},  where $\omega_{u,v}$ is a constant that captures the impact of several properties of the gas and the pipe and $\boldgreek{\lambda}(\var{q}_{u,v})$,  the so-called friction coefficient, is a function of flow. The constant, $\omega_{u,v}$, is defined by,
\begin{equation} \label{eq:FrictionFactor}
\omega_{u,v} := \frac{R_{s,m} z_m T_m L_{u,v}}{A_{u,v}^2 D_{u,v}},
\end{equation}
where the parameters are as described in Table \ref{tab:pressureloss_parameters}. The length, diameter, and area parameters are included in the model data for each pipe. For the other parameters, we follow the convention of \cite{fugenschuh2015chapter} and \cite{hante2019complementarity}: the ``$m$'' subscript indicates calculated mean values that are constant across all pipes in the model. We calculate these mean values as follows:
\begin{itemize}
    \item $R_{s,m}$ is a function of the specific gas constant and the molar masses of the gases in the network. Specifically, $R_{s,m}$ is calculated as the ratio of the universal gas constant, $R$, and the flow-weighted mean of the molar masses of the gases 
    entering the network at source nodes. We then assume that the gases have equal molar mass throughout the network so that the gas density is constant throughout the network (note that \cite{hante2019complementarity} makes the same assumption).
    
    \item $T_m$ is the flow-weighted mean temperature of all gas in the network. 
    \item $z_m$ is the compressibility factor and is the most complicated of these calculated parameters. We follow the lead of \cite{fugenschuh2015chapter} and apply the formula of Papay \cite{papay1968} to approximate the compressibility for a pipe $(u,v)$. We point the reader to \cite{fugenschuh2015chapter} for a full derivation of \eqref{eq:zm_definition}. In short, the compressibility is a function of the pressure and temperature of the gas in the pipes. We compute what is referred to as the ``reduced'' pressure and temperature for each pipe. We then utilize these to calculate flow-weighted means of pressure, $p_m$, pseudocritical pressure, $p_{c,m}$, temperature, $T_m$, and pseudocritical temperature, $T_{c,m}$. We then compute the global parameter $z_m$ as follows:
    \begin{equation} \label{eq:zm_definition}
    z_m ~=~ 1 - 3.52p_{c,m}e^{-2.26T_{c,m}}+0.247p_{c,m}^2e^{-1.878T_{c,m}}.
    \end{equation}
\end{itemize}

\begin{table}[ht!]
\caption{Pressure Loss Parameters}
\label{tab:pressureloss_parameters}
\centering \footnotesize
\begin{tabular}{clcc} \toprule
Symbol & Description   & Set/Value & Units \\ \midrule
$R_{s,m}$ & Mean specific gas constant &  & $J / (kg\cdot K)$\\
$T_m$ & Mean temperature &  & $K$  \\
$z_m$ & Mean compressibility factor &  & Unitless \\
$\omega_{u,v}$ & Friction constant & $(u,v) \in A_{pi}$ & $1/(m^2 s^2)$\\
$L_{u,v}$ & Length & $(u,v) \in A_{pi}$ & $m$ \\
$A_{u,v}$ & Cross-sectional area & $(u,v) \in A_{pi}$ & $m^2$ \\
$D_{u,v}$ & Diameter & $(u,v) \in A_{pi}$ & $m$ \\
$k_{u,v}$ & Roughness & $(u,v) \in A_{pi}$ & $m$ \\
$R$  & Universal gas constant & 8.3144621 & $J/(mol\cdot K)$ \\
$\eta$ & Dynamic viscosity & $10^{-6}$ & $kg/(m\cdot s)$\\
\bottomrule
\end{tabular}
\end{table}

Building on this foundation, we present several models for pressure loss, $\boldgreek{\phi}_{u,v}$, in Sections \ref{sec:pressureloss:HPPC}-\ref{sec:pressureloss:new}.

\subsection{Hagen-Poisseuille Prandtl-Colebrook–White (HP-PC)  pressure loss model}
\label{sec:pressureloss:HPPC}
The most accurate model for $\boldgreek{\lambda}(\var{q}_{u,v})$ is piecewise with different expressions for laminar and turbulent flows. Specifically, the model for $\boldgreek{\lambda}(\var{q}_{u,v})$ depends on the Reynolds number, $Re$, which is defined as $Re := \frac{D_{u,v}}{A_{u,v}\eta}|\var{q}_{u,v}|$, with $\eta$ representing the viscosity of the gas. If the Reynolds number is small ($\leq 2320$) the flow is laminar and $\boldgreek{\lambda}(\var{q}_{u,v})$ is modeled by the Hagen-Poisseuille equation \cite{finnemore2002fluid}. Conversely, if the Reynolds number is large, the flow is turbulent and is modeled by the Prandtl-Colebrook–White equation \cite{saleh2002fluid}. Altogether, we obtain the so-called HP-PC model for the friction coefficient:
\begin{equation}\label{eq:friction_HPPC}
 \begin{cases}
 \boldgreek{\lambda}(\var{q}_{u,v}) = \frac{64}{Re} 
 = \frac{64A_{u,v}\eta}{D_{u,v}|\var{q}_{u,v}|}, & \text{if }Re ~\leq~2320; \\
 \frac{1}{\sqrt{\boldgreek{\lambda}(\var{q}_{u,v})}} = -2\log\left(\frac{2.51}{Re\sqrt{\boldgreek{\lambda}(\var{q}_{u,v})}} + \frac{k_{u,v}}{3.71 D_{u,v}}\right), & \text{if }Re > 2320.
 \end{cases} 
 \end{equation}
Note that we follow the convention of \cite{hante2019complementarity} and set $\eta = 10^{-6}$ throughout our models. 

Now we have all the pieces to present the most accurate model for $\boldgreek{\phi}_{u,v}$, referred to as the HP-PC friction model \cite{DrinkingWater2009_opt_eng,schmidt2015high}:
\begin{equation} \label{eq:pressureloss:HPPC}
\boldgreek{\phi}^{HPPC}_{u,v} ~:=~ \omega_{u,v} \boldgreek{\lambda}(\var{q}_{u,v}) \vert \var{q}_{u,v} \vert \var{q}_{u,v},
\end{equation} where $\omega_{u,v}$ is the constant defined in \eqref{eq:FrictionFactor}, and the friction coefficient, $\boldgreek{\lambda}(\var{q}_{u,v})$, is defined as in \eqref{eq:friction_HPPC}. A function must be smooth to be suitable for use in most optimization solvers; that is, it must be continuous and first and second order differentiable. Thus, while $\boldgreek{\phi}^{HPPC}_{u,v}$ is the most accurate pressure loss model, it has characteristics that optimization solvers typically cannot handle: (1) $\boldgreek{\lambda}(\var{q}_{u,v})$ has a jump discontinuity, (2) $\boldgreek{\lambda}(\var{q}_{u,v})$ is defined implicitly for turbulent flows, and (3) the expression $\vert \var{q}_{u,v} \vert$ is nonsmooth at $\var{q}_{u,v}=0$ (recall that flow can take both positive and negative values). 

In the remainder of this section, we present and compare three smooth approximations of $\boldgreek{\phi}^{HPPC}_{u,v}$ for use in optimization models. Section \ref{sec:pressureloss:square} presents the current state-of-the-art, which uses a square root approximation to smooth the absolute value. In Section \ref{sec:pressureloss:new}, we derive two new, simpler, pressure loss models by leveraging the flow-splitting variables required by the mixing model. We provide a graphical comparison of pressure loss models in Section \ref{sec:pressureloss:compare}.

\subsection{Square root (sqrt) pressure loss model} \label{sec:pressureloss:square}

The current state-of-the-art is a smooth approximation of $\boldgreek{\phi}^{HPPC}_{u,v}$ that is derived in \cite{fugenschuh2015chapter,schmidt2015high} as an adaptation of a friction loss model developed for a water network in \cite{DrinkingWater2009_opt_eng}. The model includes additional parameters that improve accuracy relative to $\boldgreek{\phi}^{HPPC}_{u,v}$ for both laminar and turbulent flows. Specifically, this friction loss model smooths the HP-PC via a square root approximation as follows:
\begin{equation} \label{eq:pressureloss:squareroot}
\boldgreek{\phi}^{sqrt}_{u,v} ~:=~ \Lambda_{u,v} \left(\sqrt{\var{q}_{u,v}^2 + \hat{e}_{u,v}^2} + \hat{a}_{u,v} + \frac{\hat{b}_{u,v}}{\sqrt{\var{q}_{u,v}^2 + \hat{d}_{u,v}^2}}\right)\var{q}_{u,v},
\end{equation}
where 
\[{\Lambda}_{u,v} := \omega_{u,v}(2\log_{10}\rho_{u,v})^{-2},\]
and
\begin{equation*} \label{eq:pressureloss:squareroot:params1}
\hat{a}_{u,v}:=2t_{u,v}, ~~~\hat{b}_{u,v}:=(\ln \rho_{u,v} + 1)t_{u,v}^2 - \frac{\hat{e}_{u,v}^2}{2},
\end{equation*}
and
\begin{equation*} \label{eq:pressureloss:squareroot:params2}
t_{u,v} := \frac{2\alpha_{u,v}}{\rho_{u,v} \ln 10}, ~~~\alpha_{u,v} := \frac{2.51 A_{u,v} \eta}{D_{u,v}}, ~~~\rho_{u,v} := \frac{k_{u,v}}{3.71D_{u,v}},
\end{equation*}
where $\hat{d}_{u,v}$ and $\hat{e}_{u,v}$ are modeler-defined parameters. The parameters defined above are chosen so that $\boldgreek{\phi}^{sqrt}_{u,v}$ match the value and the first and second derivatives (all derivatives are taken with respect to flow, $\var{q}_{u,v}$) of $\boldgreek{\phi}^{HPPC}_{u,v}$ asymptotically (i.e., the turbulent case as $|\var{q}_{u,v}| \rightarrow \infty$), as well as the value and first and second derivative at $\var{q}_{u,v} = 0$ (improving accuracy in the laminar case). The reasoning for this choice of $\Lambda_{u,v}$ is explained in the next section in reference to \eqref{eq:friction_PKr}.

The modeler parameters, $\hat{d}_{u,v}$ and $\hat{e}_{u,v}$, must be defined appropriately to match the derivative of $\boldgreek{\phi}^{HPPC}_{u,v}$ at $\var{q}_{u,v} = 0$, and it is suggested that the relative contributions of the square root terms are balanced in \cite{DrinkingWater2009_opt_eng}.  Thus, in our implementation of this model, we set $({\boldgreek{\phi}^{sqrt}_{u,v}})'(0) = ({\boldgreek{\phi}^{HPPC}_{u,v}})'(0)$ with $\hat{d}_{u,v} = \hat{e}_{u,v}$.

$\boldgreek{\phi}^{HPPC}_{u,v}$ is linear in the laminar case, with
\begin{equation} \label{eq:laminar_derivative_at_0}
(\boldgreek{\phi}^{HPPC}_{u,v})'(0) ~=~ \frac{64\eta A_{u,v}\omega_{u,v}}{D_{u,v}}.
\end{equation}
Moreover, 
\[
(\boldgreek{\phi}^{sqrt}_{u,v})'(0) ~=~ \Lambda_{u,v}(\hat{e}_{u,v} + \hat{a}_{u,v} + \hat{b}_{u,v}/\hat{d}_{u,v})
\]
Thus, we take as $\hat{e}_{u,v}$ ($=\hat{d}_{u,v}$) the positive solution to the quadratic equation:
\begin{equation}\label{eq:modelerparam_quadratic}
\frac{1}{2}\hat{e}_{u,v}^2 
~+~ \left( \hat{a}_{u,v} - \frac{64 \eta A_{u,v}\omega_{u,v}}{D_{u,v}\Lambda_{u,v}}\right) \hat{e}_{u,v} 
~+~ (\ln \rho_{u,v} + 1)t_{u,v}^2
~=~ 0.
\end{equation}

\begin{theorem}
If $\frac{k_{u,v}}{D_{u,v}} < 1$, then \eqref{eq:modelerparam_quadratic}
has two real solutions: one positive and one negative.
\end{theorem}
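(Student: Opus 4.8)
The plan is to read \eqref{eq:modelerparam_quadratic} as a quadratic $a\hat{e}_{u,v}^2 + b\hat{e}_{u,v} + c = 0$ in the unknown $\hat{e}_{u,v}$, with
$a = \tfrac12$, $\;b = \hat{a}_{u,v} - \tfrac{64\eta A_{u,v}\omega_{u,v}}{D_{u,v}\Lambda_{u,v}}$, and $c = (\ln\rho_{u,v}+1)\,t_{u,v}^2$, and then to apply Vieta's formulas. The product of the two roots equals $c/a = 2c$, so the statement reduces to a single sign check: a quadratic with positive leading coefficient whose constant term is negative always has two distinct real roots of opposite sign. Indeed, if $c < 0$ then the discriminant $b^2 - 4ac = b^2 - 2c \ge -2c > 0$, so the roots are real and distinct, and the product $c/a = 2c < 0$ forces exactly one positive and one negative root. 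Hence the entire claim follows once I establish $c < 0$.

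Next I would dispose of the factor $t_{u,v}^2$. Since $A_{u,v}, D_{u,v}, \eta, k_{u,v} > 0$, both $\alpha_{u,v} = \tfrac{2.51\,A_{u,v}\eta}{D_{u,v}}$ and $\rho_{u,v} = \tfrac{k_{u,v}}{3.71\,D_{u,v}}$ are strictly positive (the positivity of $\rho_{u,v}$ also guarantees $\ln\rho_{u,v}$ is defined), so $t_{u,v} = \tfrac{2\alpha_{u,v}}{\rho_{u,v}\ln 10} > 0$ and therefore $t_{u,v}^2 > 0$. Consequently the sign of $c$ is governed entirely by the factor $\ln\rho_{u,v}+1$, and it remains to show $\ln\rho_{u,v}+1 < 0$, i.e., $\rho_{u,v} < e^{-1}$.

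The final step is the only place the hypothesis enters. The assumption $k_{u,v}/D_{u,v} < 1$ gives $\rho_{u,v} = \tfrac{k_{u,v}}{3.71\,D_{u,v}} < \tfrac{1}{3.71}$, and because $3.71 > e$ we obtain $\ln\rho_{u,v} < -\ln 3.71 < -1$, so $\ln\rho_{u,v}+1 < 0$. Combined with $t_{u,v}^2 > 0$ this yields $c < 0$, which completes the argument. I do not expect a genuine obstacle: the proof is essentially a sign computation, and the only point that merits care is the elementary numerical inequality $3.71 > e$ (equivalently $\ln 3.71 > 1$), which is precisely what converts the hypothesis on $k_{u,v}/D_{u,v}$ into the strict inequality $\ln\rho_{u,v} < -1$ that drives the whole result. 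It is worth remarking that the bound $k_{u,v}/D_{u,v} < 1$ is in fact slightly conservative, since the same conclusion holds whenever $\rho_{u,v} < e^{-1}$, i.e., whenever $k_{u,v}/D_{u,v} < 3.71/e$.
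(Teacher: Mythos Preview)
Your proof is correct and follows essentially the same approach as the paper: both arguments observe that the leading coefficient is positive, reduce the claim to showing the constant term $(\ln\rho_{u,v}+1)t_{u,v}^2$ is negative, and then use $k_{u,v}/D_{u,v}<1\Rightarrow\rho_{u,v}<1/3.71<1/e\Rightarrow\ln\rho_{u,v}<-1$. Your version is a bit more explicit (spelling out the discriminant check and the positivity of $t_{u,v}^2$), and your closing remark that the sharp threshold is $k_{u,v}/D_{u,v}<3.71/e$ matches the paper's own observation.
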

\begin{proof}
The coefficient of the $\hat{e}_{u,v}^2$ term in \eqref{eq:modelerparam_quadratic} is positive, so the result follows if $(\ln \rho_{u,v} + 1)t_{u,v}^2 < 0$.

In the constant term, $t_{u,v}^2$ is strictly positive, so the term is negative if $\ln \rho_{u,v} < -1$. Thus, the constant is negative if $\rho_{u,v} < 1/e$, or equivalently, if $k_{u,v} / D_{u,v} < 3.71/e$, which follows because $k_{u,v}/D_{u,v} < 1$. 
\end{proof}

\begin{remark}
In practice, the ``relative roughness'' of a pipe, $k_{u,v} / D_{u,v}$, is significantly smaller than 1 for all modern pipe materials \cite{farshad2006surface}. Thus, for real world gas networks, \eqref{eq:modelerparam_quadratic} has two solutions, one positive and one negative.
\end{remark}

\subsection{New pressure loss models} \label{sec:pressureloss:new}

In this section, we introduce two new smoothing formulations of $\boldgreek{\phi}^{HPPC}_{u,v}$ aimed at improving the computational performance of both the integer and continuous optimization models. The first formulation is similar in spirit to $\boldgreek{\phi}^{sqrt}_{u,v}$ and achieves a similar level of accuracy, but achieves this with a quadratic expression. The second is equivalent to a common simplification of $\boldgreek{\phi}^{HPPC}_{u,v}$ that sacrifices accuracy, somewhat, for improved computational performance. 

\subsubsection{Flow-Splitting (fs) pressure loss model}
In this section, we derive a new pressure loss model that has a level of accuracy similar to that of $\boldgreek{\phi}^{sqrt}_{u,v}$, but with better algebraic properties for optimization applications.

From the derivation of $\boldgreek{\phi}^{sqrt}$ in \cite{DrinkingWater2009_opt_eng}, we see that $\boldgreek{\phi}^{sqrt}$ matches $\boldgreek{\phi}^{HPPC}$ (the most accurate pressure loss model) both asymptotically and at $\var{q}_{u,v}=0$ (in value and first and second derivatives). However, $\boldgreek{\phi}^{sqrt}$, while smooth, can still be computationally expensive. Recall the formulation,
\begin{equation} \tag{\ref{eq:pressureloss:squareroot}}
\boldgreek{\phi}^{sqrt}_{u,v} = \Lambda_{u,v} \left(\sqrt{\var{q}_{u,v}^2 + \hat{e}_{u,v}^2} + \hat{a}_{u,v} + \frac{\hat{b}_{u,v}}{\sqrt{\var{q}_{u,v}^2 + \hat{d}_{u,v}^2}}\right)\var{q}_{u,v},
\end{equation}
in which the square root expressions are essentially used to smooth $\vert \var{q}_{u,v} \vert$ in $\boldgreek{\phi}^{HPPC}$. In this section, we derive an alternative model, which we call the ``flow-splitting'' pressure loss model, that uses $\boldgreek{\beta}_{u,v}+\boldgreek{\gamma}_{u,v}$ as a smooth replacement for $\vert \var{q}_{u,v} \vert$. 

The derivation of our flow-splitting pressure loss model begins with an expression that is similar to $\boldgreek{\phi}^{sqrt}$, but that reintroduces $|\var{q}_{u,v}|$:
\[
\boldgreek{\phi}_{u,v}^{fs}~=~ \Lambda_{u,v} \left( |\var{q}_{u,v}| + \ddot{a}_{u,v} + \frac{\ddot{b}_{u,v}}{|\var{q}_{u,v}| + \ddot{d}_{u,v}}\right)\var{q}_{u,v},
\]
where $\ddot{a}_{u,v}$, $\ddot{b}_{u,v}$, and $\ddot{d}_{u,v}$ are parameters that allow for the model to be fine-tuned for laminar and turbulent flows, and $|\var{q}_{u,v}|$ is replaced by flow-splitting variables. In particular, $|\var{q}_{u,v}|$ is replaced by $\boldgreek{\beta}_{u,v}+\boldgreek{\gamma}_{u,v}$ in the discrete model and by $2\boldgreek{\beta}_{u,v}-\var{q}_{u,v}$ in the continuous model.

Note that $\boldgreek{\phi}^{\textit{~fs}}_{u,v}(0)=\boldgreek{\phi}^{HPPC}_{u,v}(0)=0$ as required. \autoref{thm:flowsplitting} provides the parameter values for which $\boldgreek{\phi}^{\textit{~fs}}_{u,v}$ is consistent with $\boldgreek{\phi}^{HPPC}_{u,v}$ asymptotically and at $\var{q}_{u,v}=0$.

\begin{theorem} \label{thm:flowsplitting}
If $\ddot{a}_{u,v} := \hat{a}_{u,v}$ and 
$\ddot{b}_{u,v} := \hat{b}_{u,v}+\hat{e}_{u,v}^2/2$,
then $\boldgreek{\phi}^{\textit{~fs}}_{u,v}$ agrees with $\boldgreek{\phi}^{HPPC}_{u,v}$ asymptotically in value and first and second derivative.
Moreover, if 
\begin{equation} \label{eq:ddot}
\ddot{d}_{u,v} ~=~ \left(\frac{D_{u,v}\Lambda_{u,v}}{64 \eta A_{u,v} \omega_{u,v}-2t_{u,v}D_{u,v}\Lambda_{u,v}}\right)\ddot{b}_{u,v},
\end{equation}
then
\[
(\boldgreek{\phi}^{\textit{~fs}}_{u,v})'(0) ~=~
(\boldgreek{\phi}^{HPPC}_{u,v})'(0).
\]
\end{theorem}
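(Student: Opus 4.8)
The plan is to treat the two claims separately and, for the asymptotic claim, to route through $\boldgreek{\phi}^{sqrt}_{u,v}$ rather than confronting $\boldgreek{\phi}^{HPPC}_{u,v}$ (which is only implicitly defined in the turbulent case) head-on. Since the construction of the square-root model already guarantees that $\boldgreek{\phi}^{sqrt}_{u,v}$ agrees with $\boldgreek{\phi}^{HPPC}_{u,v}$ asymptotically in value and first and second derivative, by transitivity it suffices to show that $\boldgreek{\phi}^{fs}_{u,v}$ agrees with $\boldgreek{\phi}^{sqrt}_{u,v}$ in the same sense. Suppressing the subscript $(u,v)$, both models share the prefactor $\Lambda\var{q}$, so I would write $\boldgreek{\phi}^{fs}-\boldgreek{\phi}^{sqrt} = \Lambda\var{q}\,(B^{fs}(\var{q}) - B^{sqrt}(\var{q}))$, where $B^{fs}$ and $B^{sqrt}$ denote the bracketed factors, and expand the bracket difference in powers of $1/\var{q}$ as $|\var{q}|\to\infty$.

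First I would pin down $\ddot a$ and $\ddot b$ from value agreement. Taking $\var{q}\to+\infty$ (the case $\var{q}\to-\infty$ being symmetric through the sign of $|\var{q}|$), rationalizing $|\var{q}|-\sqrt{\var{q}^2+\hat e^2} = -\hat e^2/(|\var{q}|+\sqrt{\var{q}^2+\hat e^2})$ shows that this term, once multiplied by $\Lambda\var{q}$, tends to $-\Lambda\hat e^2/2$; the reciprocal terms contribute $\Lambda(\ddot b - \hat b)$ in the limit and the constant terms contribute $\Lambda(\ddot a - \hat a)$. Hence the value difference vanishes asymptotically exactly when $\ddot a = \hat a$ and $\ddot b = \hat b + \hat e^2/2$, the stated choices. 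For the derivatives I would differentiate $\Lambda\var{q}\,\Delta B$ via the product rule and pass to the limit, or equivalently carry the $1/\var{q}$ expansion of $\Delta B := B^{fs}-B^{sqrt}$ one or two orders further: with these choices $\Delta B = O(\var{q}^{-2})$, so $\Delta B' = O(\var{q}^{-3})$ and $\Delta B'' = O(\var{q}^{-4})$, whence $\Lambda\var{q}\,\Delta B = O(\var{q}^{-1})$, $\Lambda(\Delta B + \var{q}\,\Delta B') = O(\var{q}^{-2})$, and $\Lambda(2\Delta B' + \var{q}\,\Delta B'') = O(\var{q}^{-3})$ all tend to $0$. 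Transitivity then gives asymptotic agreement of $\boldgreek{\phi}^{fs}_{u,v}$ with $\boldgreek{\phi}^{HPPC}_{u,v}$.

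For the derivative at the origin I would compute $(\boldgreek{\phi}^{fs})'(0)$ directly. Writing $\boldgreek{\phi}^{fs} = \Lambda\big(|\var{q}|\var{q} + \ddot a\var{q} + \ddot b\var{q}/(|\var{q}|+\ddot d)\big)$, each summand is differentiable at $0$ (the denominator $|\var{q}|+\ddot d$ being positive there): $|\var{q}|\var{q}$ has derivative $0$, $\ddot a\var{q}$ contributes $\ddot a$, and a one-sided check gives that $\var{q}/(|\var{q}|+\ddot d)$ has two-sided derivative $1/\ddot d$ at $0$. Thus $(\boldgreek{\phi}^{fs})'(0) = \Lambda(\ddot a + \ddot b/\ddot d)$. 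Setting this equal to $(\boldgreek{\phi}^{HPPC})'(0) = 64\eta A\omega/D$ from \eqref{eq:laminar_derivative_at_0}, substituting $\ddot a = \hat a = 2t$, and solving the resulting linear equation for $\ddot d$ reproduces \eqref{eq:ddot}. The main obstacle is the bookkeeping in the asymptotic step: because the bracket difference is multiplied by the growing factor $\var{q}$, and because each differentiation must be paired correctly through the product rule, one must track the $1/\var{q}$ orders carefully to confirm that value and both derivatives genuinely vanish; the derivative-at-$0$ computation, by contrast, is routine once differentiability of $\boldgreek{\phi}^{fs}_{u,v}$ at the origin is established.
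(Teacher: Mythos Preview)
Your argument is correct, but the asymptotic half takes a different route from the paper's. The paper does not detour through $\boldgreek{\phi}^{sqrt}$; instead it quotes the explicit asymptotic polynomial of $\boldgreek{\phi}^{HPPC}$ for turbulent flow, $\boldgreek{\phi}^{HPPC}(x)\sim\Lambda\bigl(x^{2}+2tx+(\ln\rho+1)t^{2}\bigr)$, and observes that for $x>0$ the flow-splitting model has the asymptotic $\boldgreek{\phi}^{fs}(x)\sim\Lambda(x^{2}+\ddot a\,x+\ddot b)$. Matching coefficients of this linear-in-$x$ error immediately gives $\ddot a=2t=\hat a$ and $\ddot b=(\ln\rho+1)t^{2}=\hat b+\hat e^{2}/2$, and the derivative claims follow because the asymptotic error is then identically zero. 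Your transitivity argument via $\boldgreek{\phi}^{sqrt}$ is valid and has the virtue of being self-contained (you never need the asymptotic form of $\boldgreek{\phi}^{HPPC}$ itself), but it costs you the rationalization and $1/\var{q}$ bookkeeping that the paper sidesteps entirely. Conversely, your $O(\var{q}^{-k})$ accounting for the first and second derivatives is more explicit than the paper's, which dispatches those with a single ``Similarly''. One small wording slip: the ``constant terms'' $\ddot a-\hat a$ in your $\Delta B$, once multiplied by $\Lambda\var{q}$, contribute a linearly growing term rather than a finite limit, so it is their \emph{vanishing} that forces $\ddot a=\hat a$; your conclusion is right even if the phrasing is loose. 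The derivative-at-zero computation is essentially identical in both proofs.
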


\begin{proof}
To streamline notation, we drop the ${u,v}$ subscripts and replace $\var{q}$ with $x$. $\boldgreek{\phi}^{\textit{~fs}}$ is an odd function of $x$, so we can analyze the expression for positive flows, thereby replacing $|x|$ with $x$. 

The asymptotic behavior of $\boldgreek{\phi}^{HPPC}$ is as follows (\cite{DrinkingWater2009_opt_eng}, page 54):
\begin{equation} \label{eq:HPPC_asymptotic_behavior}
\boldgreek{\phi}^{HPPC}(x) 
~ \sim ~
\Lambda(x^2 + 2tx + (\ln \rho + 1)t^2).
\end{equation}
Thus, the asymptotic error between $\boldgreek{\phi}^{\textit{~fs}}$ and $\boldgreek{\phi}^{HPPC}$ is,
\begin{align*}
E(x) &~=~ \Lambda(x^2 + \ddot{a}x + \ddot{b})
-  \Lambda(x^2 + 2tx + (\ln \rho + 1)t^2) \\
&~=~ \Lambda(\ddot{a} - 2t)x + \Lambda(\ddot{b} - (\ln \rho + 1)t^2).
\end{align*}
When $\ddot{a}$ and $\ddot{b}$ are as above, $\boldgreek{\phi}^{\textit{~fs}}$ agrees in value with $\boldgreek{\phi}^{HPPC}$ asymptotically. Similarly, with these values $\boldgreek{\phi}^{\textit{~fs}}$ and $\boldgreek{\phi}^{HPPC}$ agree asymptotically in first and second derivatives.

 We have 
\[
(\boldgreek{\phi}^{\textit{~fs}})'(x) ~=~ \Lambda \left( 2 x + \ddot{a} + \frac{\ddot{b}\ddot{d}}{(x + \ddot{d})^2}\right),
\]
so that, $(\boldgreek{\phi}^{\textit{~fs}})'(0) = \Lambda\left(\ddot{a}+\ddot{b}/\ddot{d}\right)$. From \eqref{eq:laminar_derivative_at_0}, $(\boldgreek{\phi}^{HPPC})'(0) ~=~ \frac{64\eta A\omega}{D}$.
Given that $\ddot{a} = 2t$, the value of $\ddot{d}$ in \eqref{eq:ddot} ensures agreement of the first derivatives at $x=0$.
\end{proof}

Note that the laminar part of $\boldgreek{\phi}^{HPPC}_{u,v}$ is linear in $\var{q}_{u,v}$, so that $(\boldgreek{\phi}^{HPPC}_{u,v})''(0)=0$. One drawback of $\boldgreek{\phi}^{\textit{~fs}}_{u,v}$ relative to $\boldgreek{\phi}^{sqrt}_{u,v}$ is that $(\boldgreek{\phi}^{sqrt}_{u,v})''(0)=0$, while $(\boldgreek{\phi}^{\textit{~fs}}_{u,v})''(0)$ is fixed at a nonzero value given the parameter values specified in \autoref{thm:flowsplitting}. In practice, we numerically evaluated $(\boldgreek{\phi}^{\textit{~fs}}_{u,v})''(0)$ on several pipes from real networks and observed that the resulting value was very small in magnitude; thus not significant.

Replacing $|\var{q}_{u,v} |$ as appropriate for each optimization model, we obtain the flow-splitting pressure loss expression for the discrete optimization model,
\begin{equation}\label{eq:PressureLoss:FlowSplitting:Discrete}
\boldgreek{\phi}^{\textit{~fs}^{(d)}}_{u,v} ~=~ \Lambda_{u,v}\left(\boldgreek{\beta}_{u,v}^2 - \boldgreek{\gamma}_{u,v}^2 + \ddot{a}_{u,v}(\boldgreek{\beta}_{u,v} - \boldgreek{\gamma}_{u,v}) + \frac{\ddot{b}_{u,v}(\boldgreek{\beta}_{u,v} - \boldgreek{\gamma}_{u,v})}{\boldgreek{\beta}_{u,v} + \boldgreek{\gamma}_{u,v} + \ddot{d}_{u,v}}\right),
\end{equation} 
and for the continuous optimization model,
\begin{equation} \label{eq:PressureLoss:FlowSplitting:Continuous}  
\boldgreek{\phi}^{\textit{~fs}^{(c)}}_{u,v} ~=~ \Lambda_{u,v}\left((2
\boldgreek{\beta}_{u,v} + \ddot{a}_{u,v}) \var{q}_{u,v}  - \var{q}_{u,v}^2 + \frac{\ddot{b}_{u,v} \var{q}_{u,v}}{2\boldgreek{\beta}_{u,v} - \var{q}_{u,v} + \ddot{d}_{u,v}}\right).
\end{equation}

\subsubsection{Smooth Prandlt-K\'{a}rm\'{a}n (PKr) model}

One common simplification of the HP-PC model is the law of Prandlt-K\'{a}rm\'{a}n for hydraulically rough pipes: 
\begin{equation}\label{eq:friction_PKr}
\lambda(\var{q}_{u,v}) \approx (2 \log_{10} \rho_{u,v})^{-2}, ~~~ \rho_{u,v} = \frac{k_{u,v}}{3.71 D_{u,v}}.
\end{equation}
This model, the so-called PKr model for the friction coefficient, is obtained by taking the limit of the turbulent ($Re > 2330$) portion of \eqref{eq:friction_HPPC} as $Re \rightarrow \infty$. By using \eqref{eq:friction_PKr}, rather than \eqref{eq:friction_HPPC}, in \eqref{eq:pressureloss:HPPC}, we arrive at the PKr model for pressure loss
$\boldgreek{\phi}^{PKr}_{u,v} ~:=~ \Lambda_{u,v} \vert \var{q}_{u,v} \vert \var{q}_{u,v}$,
where ${\Lambda}_{u,v} := \omega_{u,v}(2\log_{10}\rho_{u,v})^{-2}$ is a constant. This model is attractive because it is much simpler than the HP-PC pressure loss model and matches the quadratic portion of the HP-PC model for turbulent flows (see \eqref{eq:HPPC_asymptotic_behavior} for the asymptotic behavior of $\boldgreek{\phi}^{HPPC}_{u,v}$).

Recall from Section \ref{sec:FlowSplitting:Discrete} that the positive and negative portions of the flow are captured by the variables $\boldgreek{\beta}_{u,v}$ and $\boldgreek{\gamma}_{u,v}$, respectively,
so that $\var{q}_{u,v} = \boldgreek{\beta}_{u,v}-\boldgreek{\gamma}_{u,v}$
and $\vert \var{q}_{u,v} \vert = \boldgreek{\beta}_{u,v}+\boldgreek{\gamma}_{u,v}$. Thus, we have $\vert \var{q}_{u,v} \vert \var{q}_{u,v} = (\boldgreek{\beta}_{u,v} - \boldgreek{\gamma}_{u,v})(\boldgreek{\beta}_{u,v} + \boldgreek{\gamma}_{u,v})$. Simplifying this expression, we obtain a smooth bivariate pressure loss model that is equivalent to $\boldgreek{\phi}^{PKr}_{u,v}$ and appropriate for use in the discrete model:
\begin{equation} \label{eq:pressureloss:bivariate}
\boldgreek{\phi}^{PKr^{(d)}}_{u,v} ~:=~ \Lambda_{u,v} (\boldgreek{\beta}_{u,v}^2 - \boldgreek{\gamma}_{u,v}^2).
\end{equation}
The $\boldgreek{\gamma}_{u,v}$ variables do not exist in the continuous model, but we can remove those variables by substituting $\boldgreek{\gamma}_{u,v} = \boldgreek{\beta}_{u,v} - \var{q}_{u,v}$. After simplifying, we obtain the following smooth PKr formulation for use in the continuous model:
\begin{equation} \label{eq:pressureloss:univariate}
\boldgreek{\phi}_{u,v}^{PKr^{(c)}} ~:=~ \Lambda_{u,v} (2 \boldgreek{\beta}_{u,v} \var{q}_{u,v} - \var{q}_{u,v}^2).
\end{equation}

\subsection{Comparing Pressure Loss Models} \label{sec:pressureloss:compare}

To conclude this section, we graphically compare each of the pressure loss approximations to the most accurate pressure loss model, $\boldgreek{\phi}^{HPPC}_{u,v}$, at small (\autoref{fig:582LowFlow}) and large (\autoref{fig:582HighFlow}) values of flow. In the implementation of the optimization models, the approximation is computed for each pipe in the network. For illustrative purposes, we chose one pipe from the GasLib-582 network for the graphical comparison. Specifically, this pipe has the following values: diameter $=0.5m$, roughness $=10^{-5}m$, and modeler parameters for $\boldgreek{\phi}^{sqrt}_{u,v}$ computed as $\hat{e}=\hat{d}=0.49794$. Note that discrete and continuous versions of each pressure loss approximation are the same, so we include only one line in our graphs per approximation. 

As expected, $\boldgreek{\phi}^{PKr}_{u,v}$ is the least accurate for laminar flows because it is not tuned for consistency with $\boldgreek{\phi}^{HPPC}_{u,v}$ at $\var{q}_{u,v}=0$. Conversely, both $\boldgreek{\phi}^{sqrt}_{u,v}$ and $\boldgreek{\phi}^{\textit{~fs}}_{u,v}$ are tuned for accuracy and match the first derivative of $\boldgreek{\phi}^{HPPC}_{u,v}$ at zero. That said, we noticed a trend that $\boldgreek{\phi}^{sqrt}_{u,v}$ is more consistent with $\boldgreek{\phi}^{HPPC}_{u,v}$ for very small flows; however, $\boldgreek{\phi}^{\textit{~fs}}_{u,v}$ converges more quickly to the turbulent behavior of $\boldgreek{\phi}^{HPPC}_{u,v}$. In any case, both $\boldgreek{\phi}^{sqrt}_{u,v}$ and $\boldgreek{\phi}^{\textit{~fs}}_{u,v}$ give a much more accurate approximation of the actual pressure loss than the simpler $\boldgreek{\phi}^{PKr}_{u,v}$. Once the flows are increased to higher values, as in \autoref{fig:582HighFlow}, all models converge to the same quadratic function, except that $\boldgreek{\phi}^{PKr}_{u,v}$ very slightly underestimates pressure loss. While we are demonstrating these trends for a single pipe, we saw the same behavior when we produced these graphs for several pipes, as expected based on the analysis in Section \ref{sec:pressureloss:new}.

\begin{figure}[htbp]
  \centering
  \begin{subfigure}[b]{0.49\linewidth}
    \includegraphics[width=\linewidth]{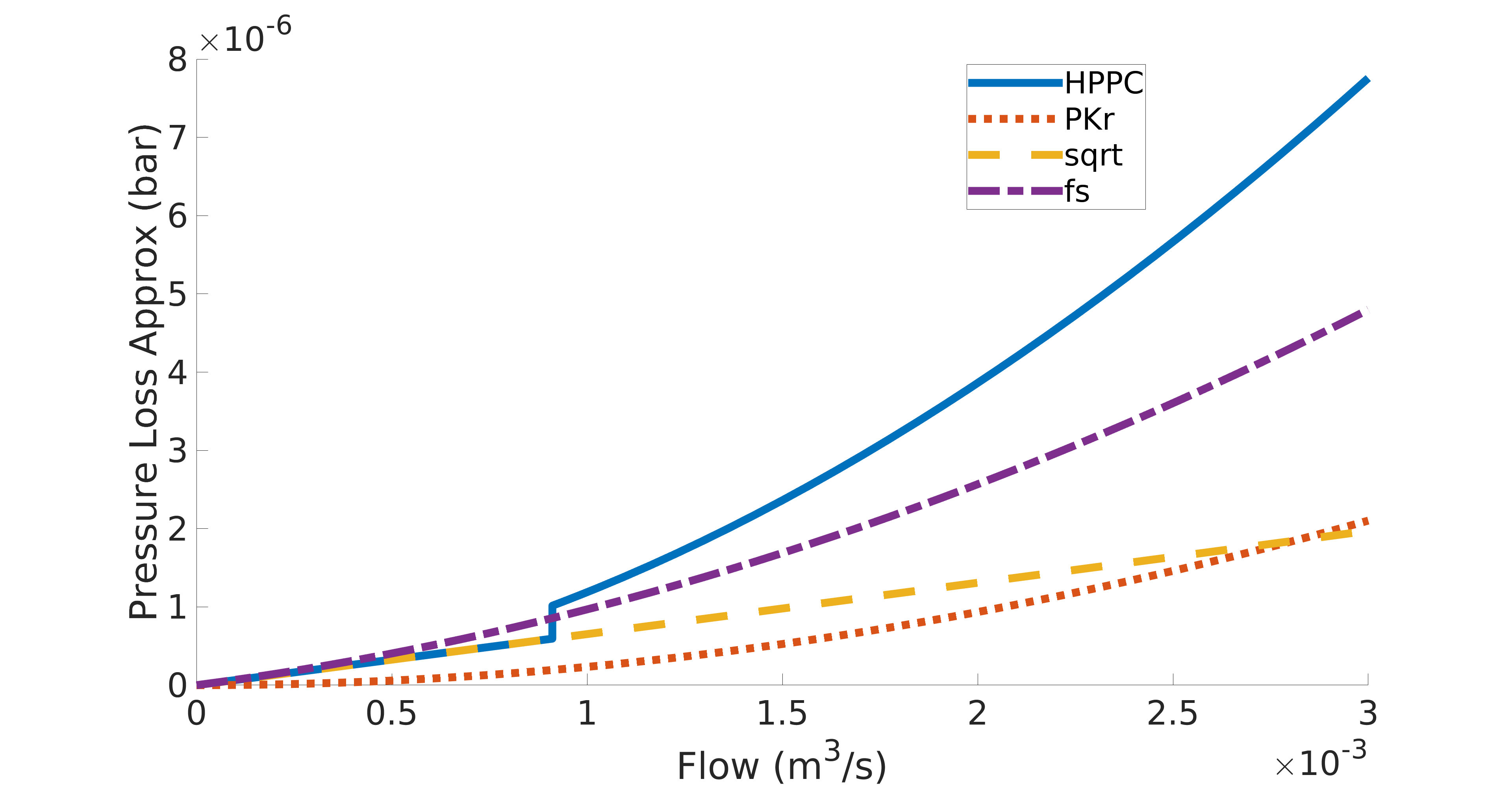}
    \caption{Low flow values}
    \label{fig:582LowFlow}
  \end{subfigure}
  \hfill
  \begin{subfigure}[b]{0.49\linewidth}
    \includegraphics[width=\linewidth]{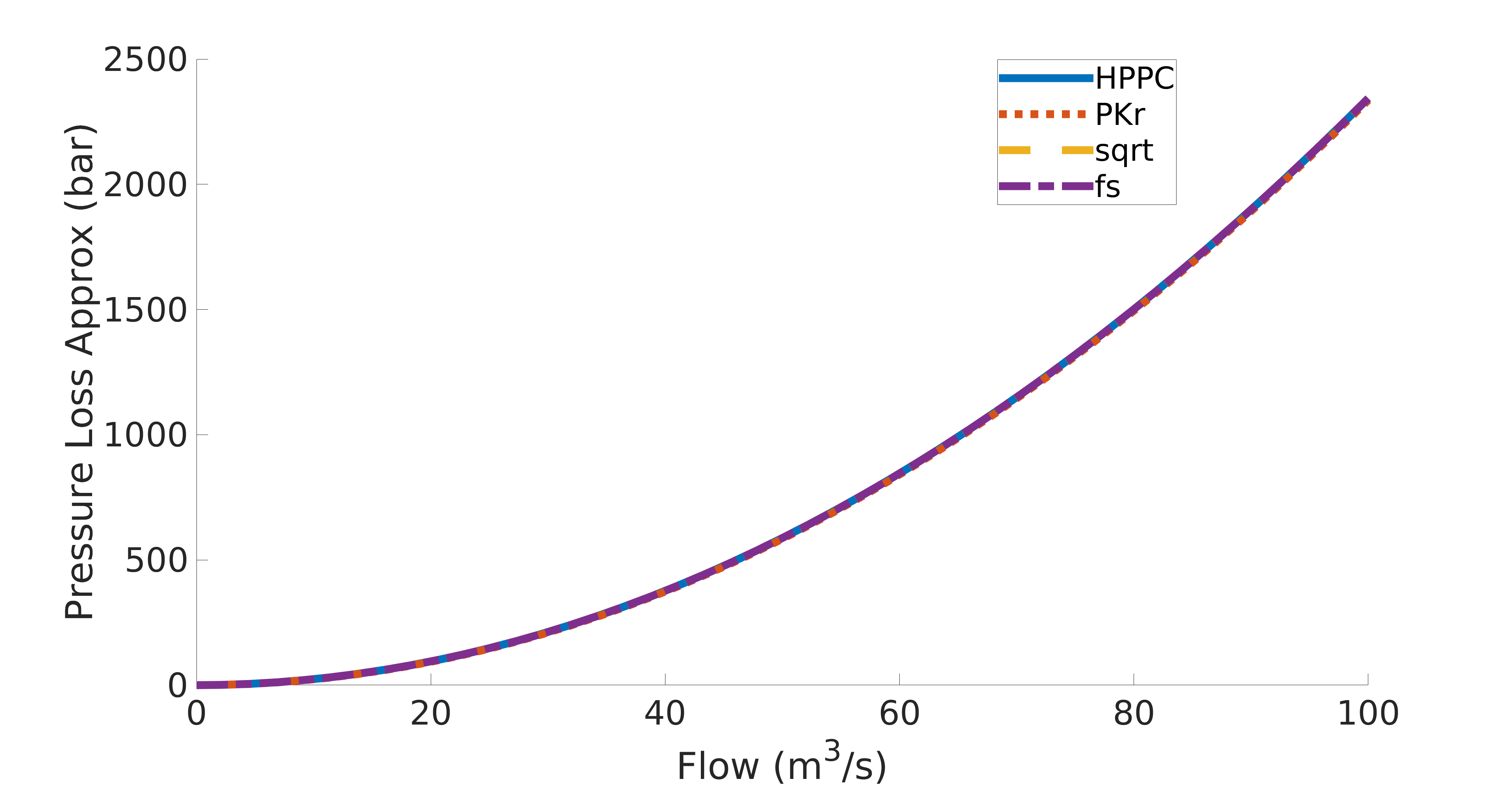}
    \caption{High flow values}
    \label{fig:582HighFlow}
  \end{subfigure}
  \caption{Pressure loss models for a sample GasLib582 Pipe}
  \label{fig:side-by-side}
\end{figure}

\section{Computational Results} \label{sec:compresults}

In this section, we computationally compare the effects of the various cuts and pressure loss formulations on the performance of both the discrete and continuous models. This section is organized as follows. Section \ref{sec:compresults:models} lists all of the models we tested. Section \ref{sec:compresults:environment} describes our computing environment. In Section \ref{sec:compresults:data}, we describe the source of our data (the GasLib repository), the instances we tested, and all of the conversions we performed to obtain the parameters we used in our models. Finally, we compare the run times of our models in Section \ref{sec:compresults:results}.

\subsection{Models Tested} \label{sec:compresults:models}

We consider two classes of models: discrete MINLPs and continuous NLPs. 

All MINLPs use the discrete flow-splitting \eqref{cons:flowsplit_linear}, mixing \eqref{cons:mixing_betagamma_smooth}, and propagation \eqref{cons:propagation_betagamma} formulations. We tested six variations of the MINLP: all three pressure loss models both without cuts and with all cuts. In particular, the discrete pressure loss models are $\boldgreek{\phi}^{sqrt}$ \eqref{eq:pressureloss:squareroot}, $\boldgreek{\phi}^{\textit{~fs}^{(d)}}$ \eqref{eq:PressureLoss:FlowSplitting:Discrete}, and $\boldgreek{\phi}^{PKr^{(d)}}$ \eqref{eq:pressureloss:bivariate}.
The cuts applied to the MINLP models are: the McCormick inequalities \eqref{cons:McCormickMixing}, the flow-direction cuts \eqref{cons:flowcuts}, and binary variable bounds on bilinaer terms \eqref{eqn:bilinearDbounds}.

All NLPs use the continuous flow-splitting \eqref{cons:Flow_Split_MPCC}, mixing \eqref{cons:mixing_MPCC}, and propagation \eqref{cons:propagation_MPCC} formulations.
We tested four variations of the NLP. 
We tested the three continuous versions of the pressure loss formulations:  $\boldgreek{\phi}^{sqrt}$ \eqref{eq:pressureloss:squareroot}, $\boldgreek{\phi}^{\textit{~fs}^{(c)}}$ \eqref{eq:PressureLoss:FlowSplitting:Continuous}, and $\boldgreek{\phi}^{PKr^{(c)}}$ \eqref{eq:pressureloss:univariate}.
The McCormick inequalities \eqref{cons:McCormickMixing} are the only cuts that apply to the continuous models. (The other categories of cuts involve the $\var{d}$ variables, which are only in the MINLP.) The results of the McCormick inequalities in the context of the NLP were so poor that we did not include all of the results; we included only the results from testing the McCormick inequalities in the NLP with $\boldgreek{\phi}^{sqrt}$ to demonstrate this phenomenon. 

\autoref{tab:AllModels} summarizes the models that we tested and provides the means, geometric means, and standard deviations of the solve times of each model applied to the same set of test instances from the GasLib test bank. The final column of the table indicates the number of instances that solved to global optimality.

\subsection{Computing Environment} \label{sec:compresults:environment}

We tested our models with the global nonlinear optimization solver, BARON \cite{sahinidis1996BARON} using default settings. We chose BARON for the following reasons: (1) it is a global solver, (2) it has been used in the gas network literature to solve mixing problems, (3) it can correctly handle integer variables, and (4) it is among the best-performing nonlinear global solvers. We generated our data files using Python, implemented our models in AMPL 20230817 \cite{fourer1987ampl}, and solved them using BARON version 23.3.11 via the AMPL command line interface. We recorded solve times using AMPL's \verb|_total_solve_time| metric. All computations were performed on a standard desktop computer running Ubuntu 18.04 with an Intel i9-10900K CPU and 32 GB of RAM. All of our data and model files are publicly available at \url{https://github.com/clouren/Gas-Optimization}.

We note that we also tried the global nonlinear solver SCIP \cite{bestuzheva2021scip} and its performance was not competitive with BARON. Similarly, it is valid to utilize a local solver for the NLP due to the complementarity constraints. We tried IPOPT \cite{wachter2006implementation} however, it was not competitive with BARON. Budgetary constraints and our primary focus on improving the MINLP formulation drove our decision to purchase only one commercial solver license, BARON, for our tests, so we were not able to test on a commercial local solver. We believe BARON is still a reasonable choice for the NLP for our tests because we are obtaining global solutions to the MINLP and, as shown in Section \ref{sec:compresults:results}, BARON's performance on the NLP agrees with the results of \cite{hante2019complementarity} utilizing commercial local solvers.

\subsection{Data and conversions} \label{sec:compresults:data}

We ran our computational tests on data from the GasLib repository, a library of realistic gas network and nomination (supply and demand) instances found at \url{https://gaslib.zib.de/} and described in \cite{GasLibInstances}. Specifically, the repository contains seven named networks (GasLib-11, GasLib-24, GasLib-40, GasLib-134, GasLib-135, GasLib-582, and GasLib-4197 where the number indicates the number of nodes in the network), each of which has one network file containing the structure of the nodes/arcs, and one scenario file containing ingress and egress flows, pressure values, etc. Furthermore, in addition to the original scenario file, GasLib-582 and GasLib-4197 contains several hundred so-called ``nomination'' files, which are additional scenario files operating on the same network. 

The instances with fewer than 582 nodes have very small run times and are intended mostly for developing and debugging models; thus, we do not consider them. We select 100 total instances from the GasLib-582 network: its original scenario file as well as 99 of the randomly selected ``nomination'' scenario files. Likewise, none of the nonlinear optimization techniques in the literature (i.e.,  \cite{hante2019complementarity,koch2015evaluating,schmidt2016high}) consider the large 4197 network due to its large size and the computational cost required to find feasible solutions. For this reason, we also do not consider this network.

The GasLib repository provides the vast majority of parameters directly, with the following caveats:
\begin{itemize}
    \item Heat power is defined for each $u \in V_-$ and is measured as $\var{H}_u \vert q_{u}^{nom} \vert $. We utilize similar heat power bounds to those of \cite{hante2019complementarity}, namely:
\[
\underbar{$P$}_u := (0.9) \overline{q}_u H_m 
\quad \text{ and } \quad 
\overline{P}_u := (1.1)\underbar{$q$}_u H_m, 
\quad \text{ for all } u \in V_-,
\]
Note that since these bounds are at sink nodes, the flow bounds are nonpositive; thus, counterintuitively, the term $\overline{q}_u$ is used for the lower bound and $\underbar{$q$}_u$ is for the upper bound. Likewise, $H_m$ represents the flow-weighted mean calorific value of all gas entering the network,
\[
H_m := \frac{\sum_{u \in V_+} q^{nom}_u H^{sup}_u}
{\sum_{u \in V_+} q^{nom}_u}.
\]
    \item We calculated global averages for: molar mass, gas temperature, pseudocritical temperature, and specific gas constant. These are used to calculate the compressibility factor described in Section \ref{sec:pressureloss:fundamentals}.
    \item Calorific values are provided for the nodes, not arcs, in the network. Thus, for each arc, we calculate a global calorific upper and lower bound as the maximum and minimum calorific values across all nodes, respectively.
\end{itemize}

Finally, we convert all units given into their SI standards. Specifically, flow is converted into $m^3/s$, molar mass is converted to $kg/mol$, all temperatures are converted to $K$, and all lengths are converted to $m$.

These standard units ($m^3/s$) to measure flow works in all except the pressure loss constraints. In particular, in $\var{p}_u^2 - \var{p}_v^2 = \boldgreek{\phi}_{u,v}$, pressure is measured in $Bar$. Thus, for the pressure loss models, we multiply the flow by its appropriate density in order to have units of $kg/s$. The pressure loss is then computed in terms of $Pa^2$. Then we convert from $Pa^2$ to $Bar^2$ by dividing by $(10^5)^2$.

\subsection{Results} \label{sec:compresults:results}

In this section, we present the results of our computational study of the ten models described in Section \ref{sec:compresults:models} when applied to the 100 instances described in Section \ref{sec:compresults:data}. All models were given a 1 hour limit. Table \ref{tab:AllModels} summarizes the ten models and lists the means, geometric means, and standard deviations of their solve times on the tested 100 GasLib-582 instances. It also lists the number of instances of each model that solved to optimality, reached the 1 hour time limit, and were mislabeled as infeasible. From these results, we make the following observations:

\begin{itemize}
\item BARON had some trouble with the MINLP before cuts were added, timing out on 24 instances with $\boldgreek{\phi}^{sqrt}$ and wrongly classifying 10 instances with $\boldgreek{\phi}^{\textit{~fs}}$ as infeasible. However, adding cuts stabilized and improved the performance of the MINLP model in the context of all three pressure loss formulations: all 100 instances solved to optimality, and did so much faster on average than the same model without cuts.

\item The NLP had the best performance with $\boldgreek{\phi}^{sqrt}$ without McCormick cuts, where summary solve times (for those instances reaching optimality) were competitive with the fastest versions of the MINLP. However, this model timed out on 19 instances. NLP with $\boldgreek{\phi}^{\textit{~fs}}$ had slightly slower solve times on solved instances, but timed out on only 11 instances. The NLP performed very poorly with the simple $\boldgreek{\phi}^{PKr}$ formulation; BARON incorrectly classified 48 instances as infeasible and timed out on 10 instances. 
\item The addition of cuts universally improved the performance of the MINLP but impaired the performance of the NLP. This general trend can be seen by the smaller geometric mean of the MINLP with cuts but higher geometric mean of the NLP with McCormick. These results are investigated in further detail in Section  \ref{sec:compresults:results:Cuts}.
\item Summary results indicate that MINLP with $\boldgreek{\phi}^{\textit{~fs}}$ and cuts is the best model for balancing computational performance and accuracy. This model had consistent performance across the instances, with average and geometric mean run times around 21.1 and 17.2 seconds, respectively, and with a relatively small standard deviation of 18.4 seconds. Furthermore, 97 out of 100 instances of this model solved within 1 minute.
\end{itemize}

\begin{table}[h!]
\centering \footnotesize
\begin{tabular}{lllrrrc}
\toprule
Model & $\boldgreek{\phi}^*$ & Cuts & Mean & Geo mean & St dev & Optimal / Limit / Infeasible \\
\midrule
MINLP$^{\dagger}$ & $sqrt$ & None  & 750.6 & 246.6 & 953.0 & 76 / 24 / 0 \\
MINLP & $sqrt$ & All & 309.2 & 129.3 & 460.8  & 100 / 0 / 0 \\
\midrule
MINLP & $fs^{(d)}$ & None & 105.3 & 68.6 & 150.4 & 90 / 0 / 10 \\
MINLP & $fs^{(d)}$ & All & 21.1 & 17.2 & 18.4 & 100 / 0 / 0 \\
\midrule
MINLP & $PKr^{(d)}$  & None  & 162.0 & 71.7 & 211.5 & 100 / 0 / 0 \\
MINLP & $PKr^{(d)}$  & All & 15.6 & 13.9 & 7.9 & 100 / 0 / 0 \\
\midrule
NLP$^{\dagger}$ & $sqrt$ & None & 26.3 & 9.7 & 77.5 & 81 / 19 / 0 \\
NLP & $sqrt$ & McC & 132.9 & 44.0 & 378.1 & 85 / 15 / 0 \\
\midrule
NLP & $fs^{(c)}$ & None & 39.8 & 14.6 & 80.2 & 89 / 11 / 0 \\
\midrule
NLP & $PKr^{(c)}$ & None & 378.9 & 80.5 & 606.2 & 42 / 10 / 48 \\
\bottomrule
\end{tabular}
\caption{Summary results for 100 gas network nomination instances. The mean, geometric mean, and standard deviation of solve times (in seconds) are restricted to instances that solved within a 1 hour time limit. The final column indicates the number of instances that solved to global optimality, timed out (limit), or were mislabeled as infeasible. $^\dagger$ These are the baseline NLP and MINLP models.}
\label{tab:AllModels}
\end{table}

In the ensuing sections, we investigate these findings in more detail using Dolan and Moré \cite{dolan2002benchmarking} performance profiles. Briefly, a performance profile is a graph used to compare algorithms (models in our case) taking into account the number of instances solved, as well as the cost required to solve each instance. The performance of each algorithm corresponds to a curve on the graph, where each point on the curve represents the percentage of instances (y-axis) the algorithm solved within a time-multiple (x-axis) of the fastest solve time (among all algorithms represented in the figure) for each instance. An important property of performance profiles is that they are insensitive to the relative difficulty among instances (i.e., they are not biased towards easy or hard instances). This is because, for any instance, all solution times are relative to the fastest solver on that instance. The simplest interpretation of the graph is that the highest curve corresponds to the best-performing algorithm. For the ensuing comparative analysis, if an instance timed out, its time was set to be 1 hour for comparison purposes.

\subsubsection{Baseline Models}

Hante and Schmidt \cite{hante2019complementarity} performed a pairwise comparison of the baseline MINLP and NLP models (with $\boldgreek{\phi}^{sqrt}$ and no cuts) and concluded that the NLP had better performance. Our results agree with this finding. NLP was able to solve more instances overall (81 as opposed to 76) and was faster for 80\% of solved instances. Furthermore, the baseline NLP completely dominates the baseline MINLP in summary statistics; specifically, the MINLP average and geometric mean solve times were 28.5 and 25.4 times slower, respectively, than those of the NLP. The NLP model is the clear winner in the performance profile in Figure \ref{fig:BaselineModels}.

\begin{figure}
    \centering
    \includegraphics[width=0.75\textwidth]{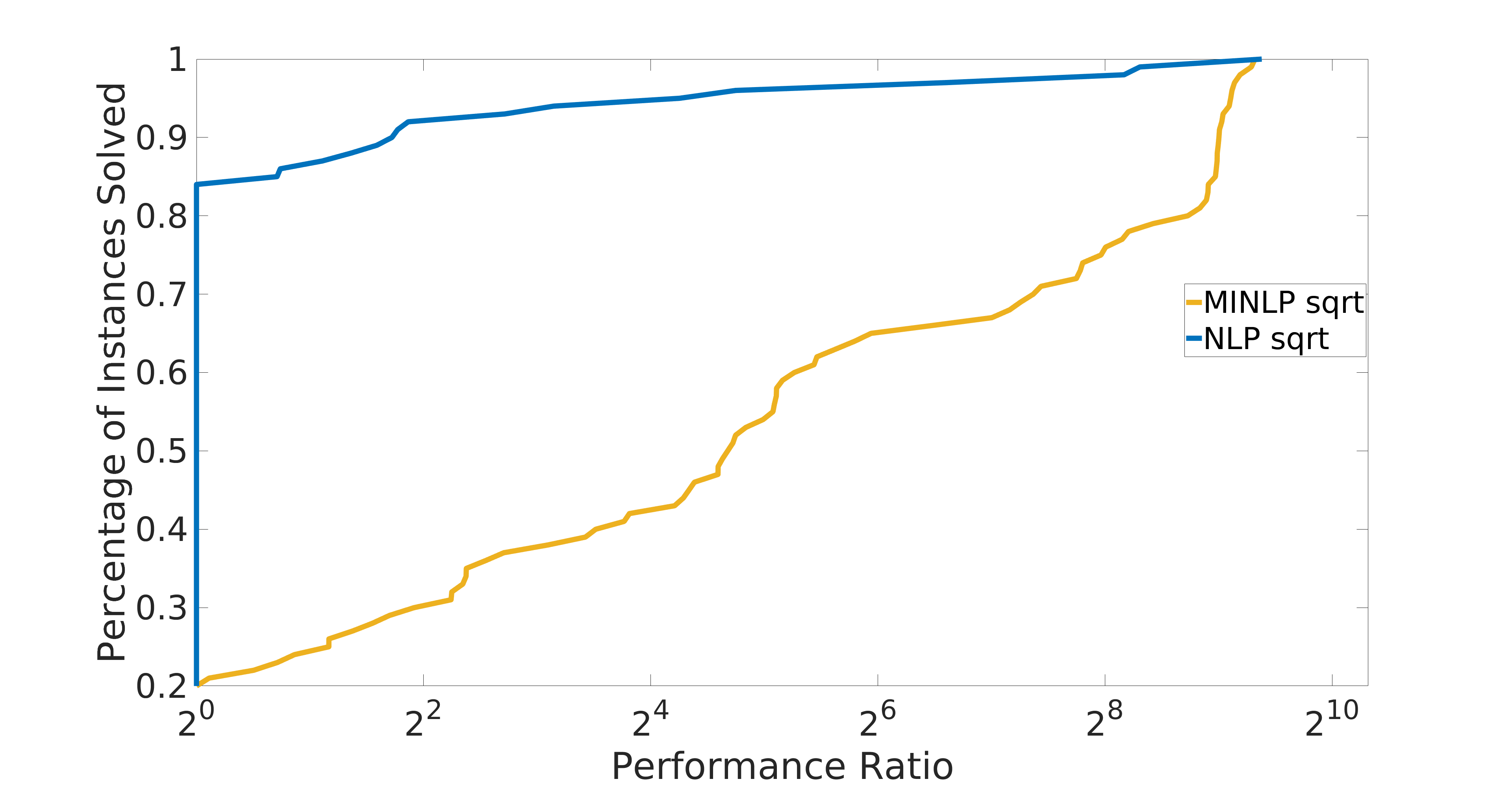}
    \caption{Baseline NLP model dominates baseline MINLP}
    \label{fig:BaselineModels}
\end{figure}

\subsubsection{Effects of Cuts} \label{sec:compresults:results:Cuts}

Next, we investigate the impact of adding cuts to the NLP and MINLP models. For the NLP, the only applicable cuts are the McCormick inequalities; these had a significantly negative impact on all variants of the NLP model in our tests. As an example to demonstrate the impact of cuts on the performance of the NLP models, we compare the performance of the baseline NLP with and without McCormick inequalities. The McCormick inequalities dramatically increased the NLP run times: they increased the average and geometric mean run times by factors of 5.1 and 4.5, respectively, as indicated in Table \ref{tab:AllModels}.

Conversely, the cuts significantly improved the performance of the MINLP with all pressure loss variants. Specifically, adding cuts to the MINLP improved $\boldgreek{\phi}^{sqrt}$, $\boldgreek{\phi}^{\textit{~fs}}$, and $\boldgreek{\phi}^{PKr}$ by factors of 2.4, 5.0, and 10.4, on average respectively, and 1.9, 4.0, 5.2, in geometric mean respectively.

The performance profiles in Figure \ref{fig:perf:CutsAllEffects} confirm all of these findings: the MINLP is significantly accelerated by the addition of cuts, while the McCormick inequalities hinder the performance of the NLP. Most notably, the cuts had extremely positive effects on the MINLP with $\boldgreek{\phi}^{\textit{~fs}}$ and $\boldgreek{\phi}^{\textit{PKr}}$, as with cuts the models were fastest for over 90\% of instances.

\begin{figure}[t!]
    \centering
    \begin{subfigure}[t]{0.49\textwidth}
        \centering
        \includegraphics[width=0.99\textwidth]{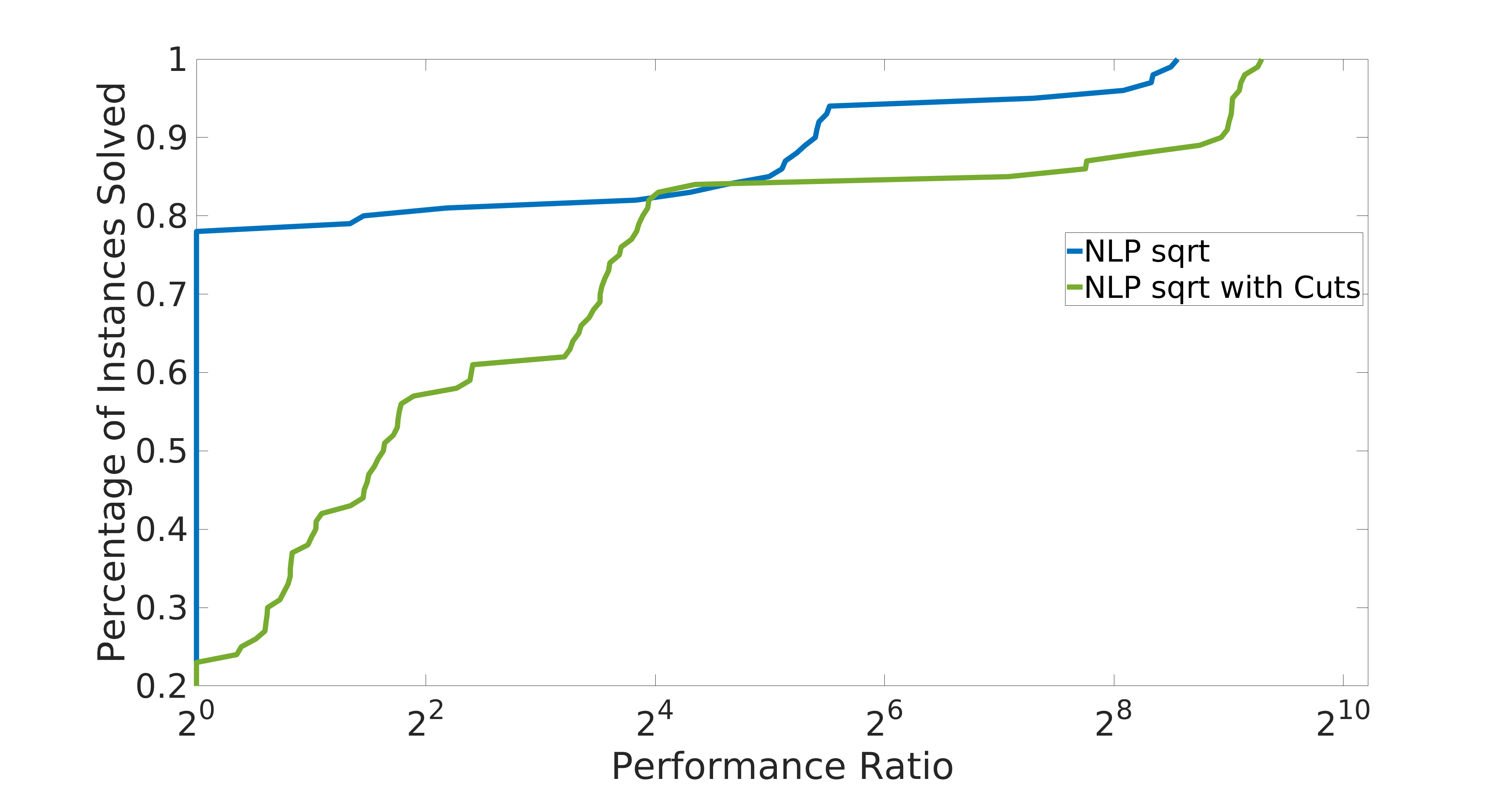}
        \caption{McCormick cuts significantly hamper NLP}
        \label{fig:perf:Cuts_NLP}
    \end{subfigure}%
    \begin{subfigure}[t]{0.49\textwidth}
        \centering
		\includegraphics[width=0.99\textwidth]{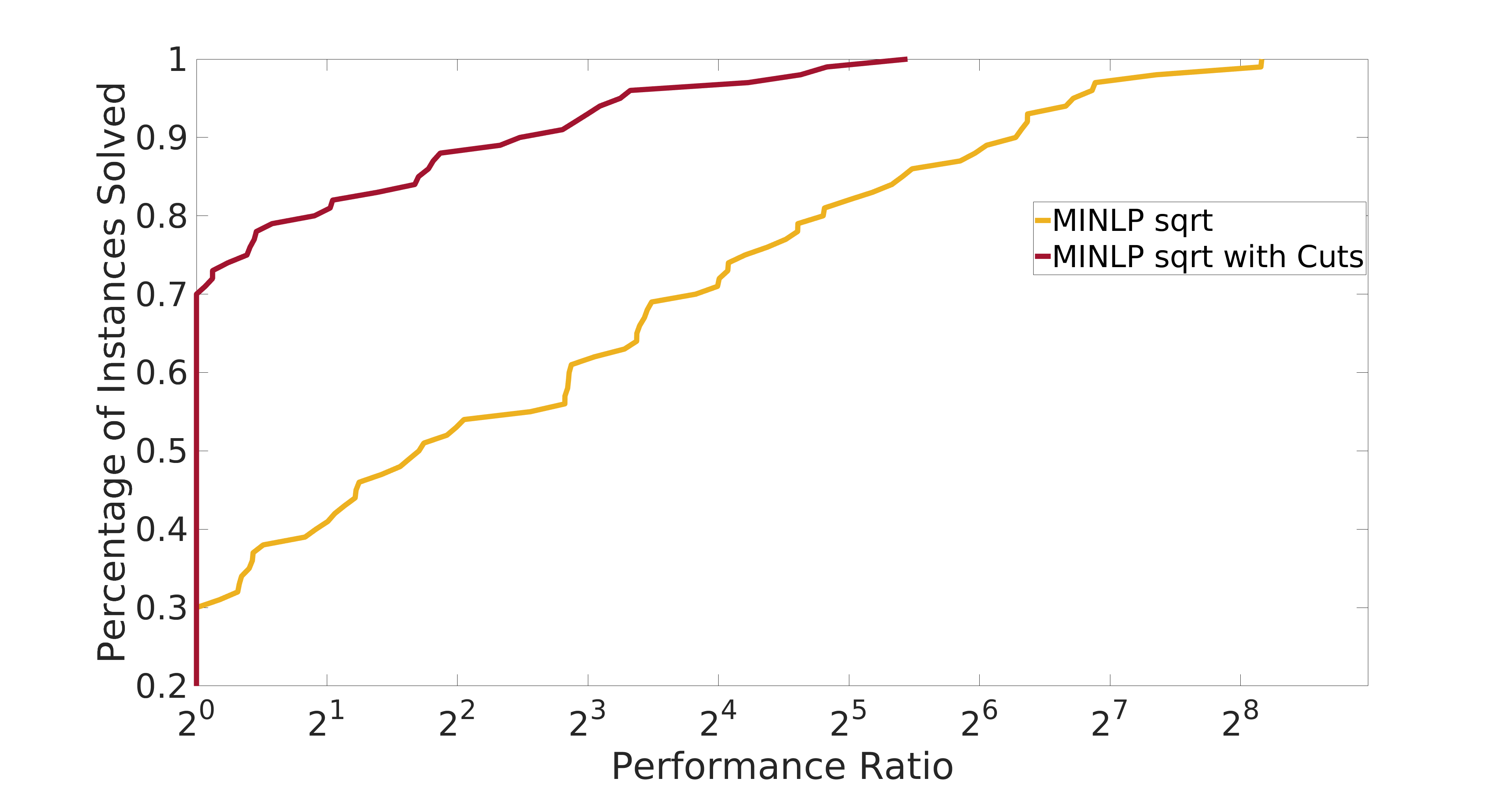}
        \caption{Cuts significantly accelerate MINLP with $\boldgreek{\phi}^{sqrt}$}
        \label{fig:perf:Cuts_MINLP_SR}
    \end{subfigure}

    \begin{subfigure}[t]{0.49\textwidth}
        \centering
		\includegraphics[width=0.99\textwidth]{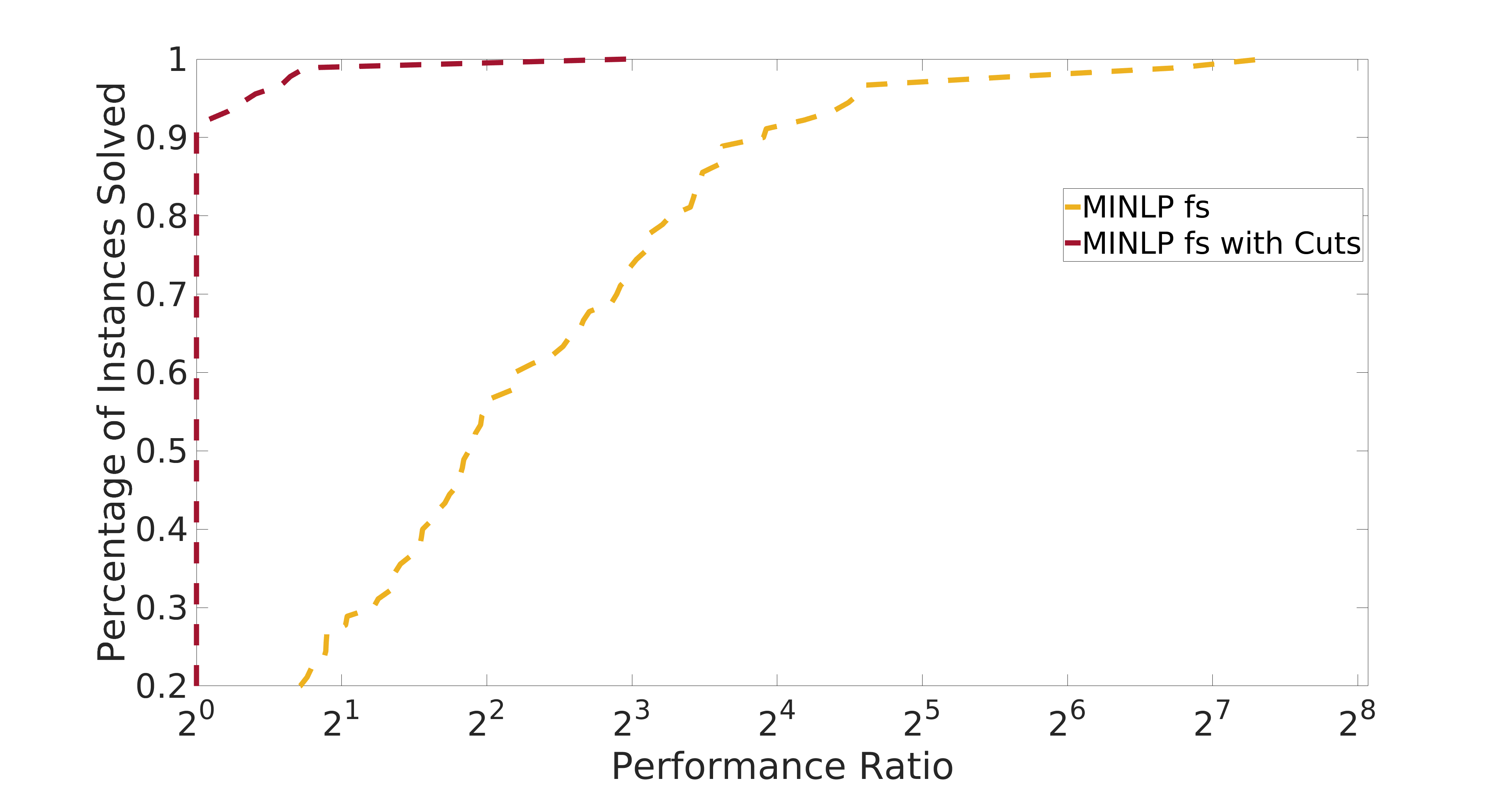}
       \caption{Cuts significantly accelerate MINLP with $\boldgreek{\phi}^{\textit{~fs}}$}
       \label{fig:perf:Cuts_MINLP_FS}
   \end{subfigure}    \begin{subfigure}[t]{0.49\textwidth}
        \centering
		\includegraphics[width=0.99\textwidth]{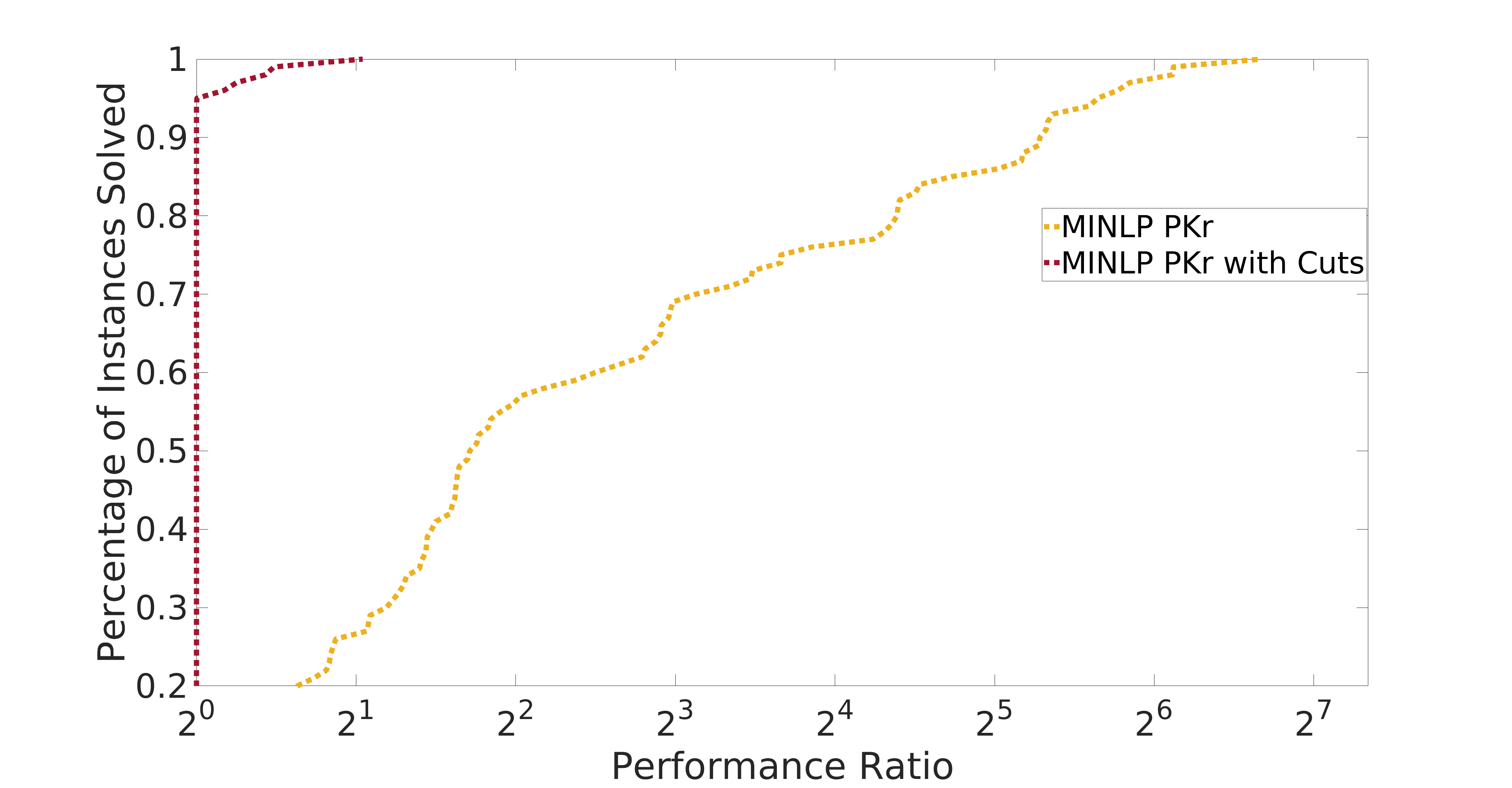}
        \caption{Cuts significantly accelerate MINLP with $\boldgreek{\phi}^{PKr}$}
        \label{fig:perf:Cuts_MINLP_Uni}
    \end{subfigure}
    \caption{Effects of cuts on the computational performance of the NLP and MINLP.}
    \label{fig:perf:CutsAllEffects}
\end{figure}

\subsubsection{Pressure Loss Model Comparisons}

\begin{figure}
    \centering
    \includegraphics[width=0.7\textwidth]{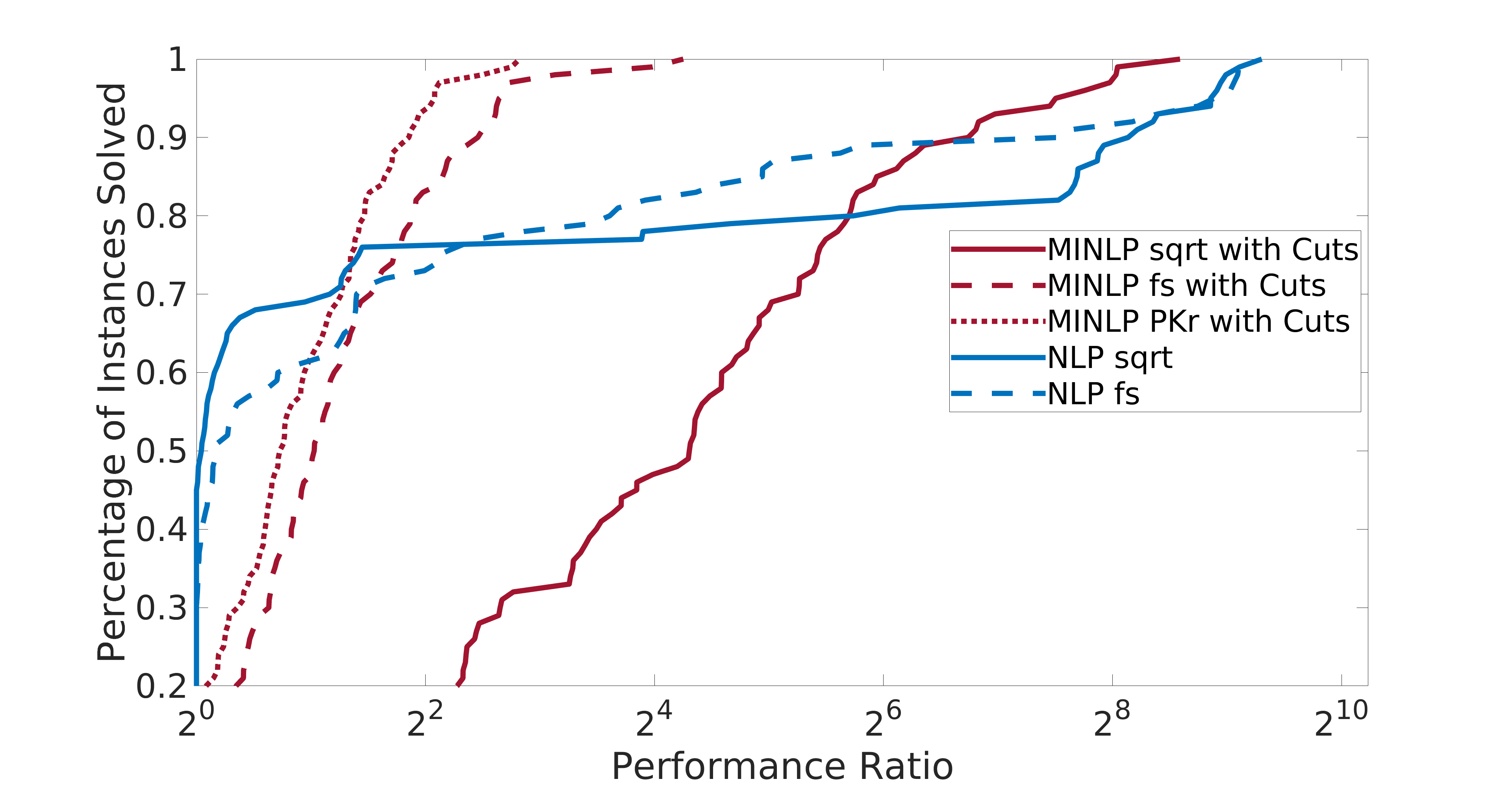}
    \caption{Performance Profile All Pressure Loss Models}
    \label{fig:Perf:MINLP_vs_MPCC_Best_ALL}
\end{figure}

Finally, we compare the fastest variants of all three pressure loss models (the NLP models without cuts and the MINLP models with cuts) in Figure \ref{fig:Perf:MINLP_vs_MPCC_Best_ALL} and Table \ref{tab:AllModels}. We omit the NLP with $\boldgreek{\phi}^{PKr}$ from this comparison because so few of those instances solved to optimality. From the comparison of the remaining five ``fastest'' models, we draw the following conclusions:
    \begin{itemize}
        \item As \cite{hante2019complementarity} discovered, the NLP model with $\boldgreek{\phi}^{sqrt}$ is bimodal: it is either very fast, or times out. The NLP with $\boldgreek{\phi}^{\textit{~fs}}$ had similar performance, except that it was slightly slower but was able to solve more instances (89 versus 81) within the 1 hour time limit. One of these two models had the fastest solve time in 75 out of the 100 instances (NLP with $\boldgreek{\phi}^{sqrt}$ won 45 times and NLP with $\boldgreek{\phi}^{\textit{~fs}}$ won 30 times).
        \item The MINLP with cuts had the most consistent performance. Regardless of the pressure loss model, all 100 instances of the MINLP with cuts solved to optimality within the 1 hour time limit.
        \item The new pressure loss formulations, $\boldgreek{\phi}^{\textit{~fs}}$ and $\boldgreek{\phi}^{PKr}$, significantly outshone $\boldgreek{\phi}^{sqrt}$ in the context of the MINLP with cuts. In addition to having stable and consistent performance, these two models were also fast, as indicated in Figure \ref{fig:Perf:MINLP_vs_MPCC_Best_ALL} and Table \ref{tab:AllModels}. In particular:
        \begin{itemize}
        \item The MINLP with $\boldgreek{\phi}^{\textit{~fs}}$ solved in under 50 seconds for 97 out of 100 instances. The slowest solve time for this model was 142.7 seconds. Head-to-head among all five models, this model was the fastest on 11 out of 100 gas network nominations.
        \item All instances of the MINLP with $\boldgreek{\phi}^{PKr}$ solved within 42 seconds, with 93 instances solving in under 30 seconds. This model was the fastest on 14 out of 100 nominations.
        \end{itemize}
\end{itemize}
In conclusion, the MINLP with $\boldgreek{\phi}^{\textit{~fs}}$ has the best performance among our studied models in terms of reliability, accuracy, and solve times utilizing the global solver BARON. While the MINLP with $\boldgreek{\phi}^{PKr}$ is slightly faster on average, the difference is likely not significant enough to outweigh the improved accuracy of $\boldgreek{\phi}^{\textit{~fs}}$. Furthermore, these results indicate that the MINLP, with added cuts and a modified pressure loss formulation, can be competitive with NLP.

\section{Conclusion} \label{sec:conclusion}

This paper considers various improvements to gas network optimization models that incorporate mixing and pressure loss. We consider improvements on two baseline nonlinear optimization models: a continuous nonlinear model and a discrete mixed-integer nonlinear model with binary variables. First we introduced three classes of cuts aimed at improving the performance of the discrete mixing formulation: (1) standard bilinear McCormick inequalities, (2) flow direction cuts using the binary variables, and (3) binary variable bounds on bilinear terms. Next, we derived two new smooth pressure loss formulations that take advantage of the directional flow variables required for the mixing formulations: (1) a ``flow-splitting'' formulation that is tuned to the most accurate pressure loss model (the HP-PC model) for both turbulent and laminar flows, and (2) a less accurate approximation that is equivalent to the existing PKr pressure loss approximation. We performed an extensive computational study and drew the following conclusions: 
\begin{itemize}
\item adding cuts significantly improved the discrete model (but hindered the continuous model);
\item with all pressure loss models, the MINLP with cuts was reliable  in that all 100 instances solved within the 1 hour time limit, whereas the NLP models (without cuts) were bimodal; they either solved very quickly or timed out;
\item in addition to consistent performance, the MINLP with cuts and either of the new pressure loss formulations was competitive with the NLP in terms of mean and geometric mean solve times on solved instances.
\end{itemize}

We remark that we performed our tests using the global nonlinear optimization solver, BARON \cite{sahinidis1996BARON}. In addition to BARON, there are other commercial nonlinear optimization solvers, such as ANTIGONE \cite{misener2014antigone}, SCIP \cite{bestuzheva2021scip}, Knitro \cite{waltz2004knitro}, CONOPT \cite{drud1994conopt}, SNOPT \cite{gill2005snopt}, Ipopt \cite{wachter2006implementation}, and MINOS \cite{murtagh1983minos}, that have recently been used in the gas network literature \cite{hante2019complementarity,schmidt2016high}. Among these, only BARON, ANTIGONE, and SCIP return globally optimal solutions for MINLPs. The remaining solvers are nonlinear solvers that find locally optimal solutions. These solvers may result in faster solution times than BARON on the tested NLPs. Because our interest was primarily on the MINLP and we had budget for only one commercial solver, we did not investigate the performance of these alternate solvers on the NLP. It would be valuable further research to compare the best MINLP model using BARON to the best NLP model using appropriate commercial nonlinear solvers.

\bibliographystyle{spmpsci}     
\bibliography{gasbib} 

\begin{thebibliography}{10}
\providecommand{\url}[1]{{#1}}
\providecommand{\urlprefix}{URL }
\expandafter\ifx\csname urlstyle\endcsname\relax
  \providecommand{\doi}[1]{DOI~\discretionary{}{}{}#1}\else
  \providecommand{\doi}{DOI~\discretionary{}{}{}\begingroup \urlstyle{rm}\Url}\fi

\bibitem{bestuzheva2021scip}
Bestuzheva, K., Besan{\c{c}}on, M., Chen, W.K., Chmiela, A., Donkiewicz, T., van Doornmalen, J., Eifler, L., Gaul, O., Gamrath, G., Gleixner, A., et~al.: The {SCIP} optimization suite 8.0.
\newblock arXiv preprint arXiv:2112.08872  (2021)

\bibitem{DrinkingWater2009_opt_eng}
Burgschweiger, J., Gn{\"a}dig, B., Steinbach, M.C.: Optimization models for operative planning in drinking water networks.
\newblock Optimization and Engineering \textbf{10}(1), 43--73 (2009)

\bibitem{dolan2002benchmarking}
Dolan, E.D., Mor{\'e}, J.J.: Benchmarking optimization software with performance profiles.
\newblock Mathematical programming \textbf{91}, 201--213 (2002)

\bibitem{drud1994conopt}
Drud, A.S.: Conopt—a large-scale grg code.
\newblock ORSA Journal on computing \textbf{6}(2), 207--216 (1994)

\bibitem{farshad2006surface}
Farshad, F.F., Rieke, H.H.: Surface-roughness design values for modern pipes.
\newblock SPE Drilling \& Completion \textbf{21}(03), 212--215 (2006)

\bibitem{finnemore2002fluid}
Finnemore, E.J., Franzini, J.B.: Fluid mechanics with engineering applications.
\newblock McGraw-Hill Education (2002)

\bibitem{fourer1987ampl}
Fourer, R., Gay, D.M., Kernighan, B.W.: AMPL: A mathematical programming language.
\newblock AT \& T Bell Laboratories Murray Hill, NJ (1987)

\bibitem{fugenschuh2015chapter}
F{\"u}genschuh, A., Gei{\ss}ler, B., Gollmer, R., Morsi, A., R{\"o}vekamp, J., Schmidt, M., Spreckelsen, K., Steinbach, M.C.: Physical and technical fundamentals of gas networks.
\newblock In: Evaluating gas network capacities, pp. 17--43, Chapter 2. SIAM (2015)

\bibitem{geissler2015chapter}
Gei{\ss}ler, B., Martin, A., Morsi, A., Schewe, L.: The {MILP}-relaxation approach.
\newblock In: Evaluating Gas Network Capacities, pp. 103--122, Chapter 6. SIAM (2015)

\bibitem{geissler2015solving}
Gei{\ss}ler, B., Morsi, A., Schewe, L., Schmidt, M.: Solving power-constrained gas transportation problems using an mip-based alternating direction method.
\newblock Computers \& Chemical Engineering \textbf{82}, 303--317 (2015)

\bibitem{gill2005snopt}
Gill, P.E., Murray, W., Saunders, M.A.: Snopt: An sqp algorithm for large-scale constrained optimization.
\newblock SIAM review \textbf{47}(1), 99--131 (2005)

\bibitem{greenberg1995poolinganalyzing}
Greenberg, H.J.: Analyzing the pooling problem.
\newblock ORSA Journal on Computing \textbf{7}(2), 205--217 (1995)

\bibitem{Habeck2022}
Habeck, O., Pfetsch, M.E.: Combinatorial acyclicity models for potential‐based flows.
\newblock Networks \textbf{79}(1), 83--104 (2022)

\bibitem{hamedi2011optimization}
Hamedi, M., Farahani, R.Z., Esmaeilian, G., et~al.: Optimization in natural gas network planning.
\newblock Logistics operations and management pp. 393--420 (2011)

\bibitem{hante2019complementarity}
Hante, F.M., Schmidt, M.: Complementarity-based nonlinear programming techniques for optimal mixing in gas networks.
\newblock EURO Journal on Computational Optimization \textbf{7}(3), 299--323 (2019)

\bibitem{koch2015evaluating}
Koch, T., Hiller, B., Pfetsch, M.E., Schewe, L.: Evaluating gas network capacities.
\newblock SIAM (2015)

\bibitem{luo1996mathematical}
Luo, Z.Q., Pang, J.S., Ralph, D.: Mathematical programs with equilibrium constraints.
\newblock Cambridge University Press (1996)

\bibitem{martin2006mixed}
Martin, A., M{\"o}ller, M., Moritz, S.: Mixed integer models for the stationary case of gas network optimization.
\newblock Mathematical programming \textbf{105}, 563--582 (2006)

\bibitem{mccormick1976computability}
McCormick, G.P.: Computability of global solutions to factorable nonconvex programs: Part i—convex underestimating problems.
\newblock Mathematical programming \textbf{10}(1), 147--175 (1976)

\bibitem{misener2014antigone}
Misener, R., Floudas, C.A.: Antigone: algorithms for continuous/integer global optimization of nonlinear equations.
\newblock Journal of Global Optimization \textbf{59}(2-3), 503--526 (2014)

\bibitem{murtagh1983minos}
Murtagh, B.A., Saunders, M.A., et~al.: MINOS 5.0 user's guide, vol.~83.
\newblock Systems Optimization Laboratory, Department of Operations Research, Stanford~… (1983)

\bibitem{papay1968}
Papay, J.: Atermelestechnologiai parameterek valtozasa a gazlelepk muvelese soran.
\newblock OGIL MUSZ, Tud, Kuzl., Budapest pp. 267--273 (1968)

\bibitem{pfetsch2015validation}
Pfetsch, M.E., F{\"u}genschuh, A., Gei{\ss}ler, B., Gei{\ss}ler, N., Gollmer, R., Hiller, B., Humpola, J., Koch, T., Lehmann, T., Martin, A., et~al.: Validation of nominations in gas network optimization: models, methods, and solutions.
\newblock Optimization Methods and Software \textbf{30t}(1), 15--53 (2015)

\bibitem{rios2015reviewoptimization}
R{\'\i}os-Mercado, R.Z., Borraz-S{\'a}nchez, C.: Optimization problems in natural gas transportation systems: A state-of-the-art review.
\newblock Applied Energy \textbf{147}, 536--555 (2015)

\bibitem{sahinidis1996BARON}
Sahinidis, N.V.: Baron: A general purpose global optimization software package.
\newblock Journal of Global Optimization \textbf{8}, 201--205 (1996)

\bibitem{saleh2002fluid}
Saleh, J.M.: Fluid flow handbook.
\newblock McGraw-Hill Education (2002)

\bibitem{GasLibInstances}
Schmidt, M., Aßmann, D., Burlacu, R., Humpola, J., Joormann, I., Kanelakis, N., Koch, T., Oucherif, D., Pfetsch, M.E., Schewe, L., Schwarz, R., Sirvent, M.: Gaslib—a library of gas network instances.
\newblock Data \textbf{2}(4) (2017).
\newblock \doi{10.3390/data2040040}.
\newblock \urlprefix\url{https://www.mdpi.com/2306-5729/2/4/40}

\bibitem{schmidt2015high}
Schmidt, M., Steinbach, M.C., Willert, B.M.: High detail stationary optimization models for gas networks.
\newblock Optimization and Engineering \textbf{16}(1), 131--164 (2015)

\bibitem{schmidt2016high}
Schmidt, M., Steinbach, M.C., Willert, B.M.: High detail stationary optimization models for gas networks: validation and results.
\newblock Optimization and Engineering \textbf{17}(2), 437--472 (2016)

\bibitem{wachter2006implementation}
W{\"a}chter, A., Biegler, L.T.: On the implementation of an interior-point filter line-search algorithm for large-scale nonlinear programming.
\newblock Mathematical programming \textbf{106}, 25--57 (2006)

\bibitem{waltz2004knitro}
Waltz, R.A., Nocedal, J.: Knitro 2.0 user’s manual.
\newblock Ziena Optimization, Inc.[en ligne] disponible sur http://www. ziena. com (September, 2010) \textbf{7}, 33--34 (2004)

\end{thebibliography}

\end{document}